\documentclass[12pt]{amsart}
 
\usepackage[T1]{fontenc}
\usepackage{amssymb,bm,mathrsfs,amsmath}
\usepackage{mathtools}
 
\usepackage[utf8]{inputenc}
 
\usepackage[colorlinks=true, linkcolor=blue, citecolor=red,
urlcolor=blue]{hyperref}

\calclayout

\title{Sphere Packings in Higher Dimension \\ (after Boaz Klartag)}
 
\author{Guillaume Aubrun}
\address{Institut Camille Jordan, Université Lyon 1}
\email{aubrun@math.univ-lyon1.fr}
 
\newcommand{\R}{\mathbf{R}}

\newcommand{\Z}{\mathbf{Z}}
\newcommand{\E}{\mathbf{E}}
\newcommand{\e}{\varepsilon}
\renewcommand{\P}{\mathbf{P}}
\newcommand{\iy}{\infty}
\newcommand{\Id}{\mathrm{Id}}
\newcommand{\Sym}{\mathrm{Sym}}
\newcommand{\gGL}{\mathsf{GL}}
\newcommand{\gSL}{\mathsf{SL}}

\DeclareMathOperator{\tr}{Tr}

\DeclareMathOperator{\vol}{vol}
\DeclareMathOperator{\covol}{covol}
\newcommand{\scalar}[2]{\langle #1 , #2\rangle}

\renewcommand{\leq}{\leqslant}
\renewcommand{\geq}{\geqslant}

\theoremstyle{plain}
\newtheorem{theorem}{Theorem}

\newtheorem{proposition}{Proposition}

\newtheorem{lemma}[theorem]{Lemma}

\theoremstyle{remark}
\newtheorem{remark}{Remark}

\begin{document}
 
\begin{abstract}
Let $\delta_n^L$ be the maximal density of a lattice sphere packing in the $n$-dimensional Euclidean space. We explain how Boaz Klartag proved the inequality $\delta_n^L \geq c n^2 2^{-n}$ where $c>0$ is a universal constant. In higher dimension, even for non-lattice sphere packings, this new lower bound is a substantial improvement.

Klartag's proof uses the probabilistic method in two different ways. The first, very standard, relies on the statistical properties of a uniformly chosen random lattice. The second, completely new, studies the stochastic evolution of an ellipsoid constrained to contain non nonzero lattice points in the interior.
\end{abstract}
 
\maketitle

This text is an English translation of the notes prepared (in French) for the Bourbaki seminar given by the author in June 2026.
 
\section{Introduction}
 
In the Euclidean space $(\R^n,\|\cdot\|)$, we denote by $B(x,r)$ the open ball of center~$x$ and radius $r>0$. A \emph{sphere packing} is a family $(B(x_i,r))$ of balls of the same radius $r$ that are pairwise disjoint, which amounts to saying that their centers satisfy $\|x_i-x_j\| \geq 2r$ for $i \neq j$. The \emph{density} of such a packing is defined, when it exists, as the limit
\[ \lim_{R \to \iy} \frac{\vol \left( B(0,R) \cap \bigcup_i B(x_i,r) \right)}{\vol ( B(0,R) )} \]
Let $\delta_n$ denote the maximum density of a sphere packing in $\R^n$. Since the problem is invariant under scaling, we may assume the spheres have radius $1$. 
Here is the exhaustive list of values known to date.
\begin{itemize}
\item $\delta_1=1$. Trivial.
\item $\delta_2 = \frac{\pi}{\sqrt{12}}$. This density is achieved for balls centered at the points of the hexagonal lattice.
\item $\delta_3 = \frac{\pi}{\sqrt{18}}$. This is Kepler's conjecture proved by \cite{Hales05}. This density is achieved for balls centered at the points of the face-centered cubic lattice.
\item $\delta_8 = \frac{\pi^4}{384}$ \cite{Viazovska17}. This density is achieved for balls centered at the points of the $E_8$ lattice.
\item $\delta_{24} = \frac{\pi^{12}}{12!}$ \cite{CKMRV17}. This density is achieved for balls centered at the points of the Leech lattice.
\end{itemize}
 
Let $\delta_n^L$ denote the maximum density of a lattice sphere packing, i.e., a sphere packing whose sphere centers form a lattice in $\R^n$. Computing $\delta_n^L$ is a fundamental question in the geometry of numbers. The value is known for $n \leq 9$ and $n=24$. A reference on these questions is \cite{Splag}; the case $n=9$ was recently resolved in \cite{SW25}.
 
It is obvious that $\delta_n^L \leq \delta_n$. In the rare cases where the value of $\delta_n$ is known, it equals $\delta_n^L$. However, it is believed that the values of $\delta_n^L$ and~$\delta_n$ differ for all sufficiently large~$n$.
 
This paper is concerned with the estimation of $\delta_n$ and $\delta_n^L$ as the dimension $n$ tends to infinity. Let us first recall the history of these questions.
 
It is very easy to prove the inequality $\delta_n \geq 2^{-n}$ by considering a maximal family of pairwise disjoint balls of radius $r$ and noting that balls with the same centers and radius $2r$ cover~$\R^n$. The inequality $\delta_n^L \geq 2^{-n}$ is more delicate; it is a consequence of the Minkowski--Hlawka theorem, stated by Minkowski as early as 1893 and proved in \cite{Hlawka43}. We will give a proof in Section~\ref{section:rogers} using \emph{random} lattices. For large $n$, no explicit example is known of a lattice sphere packing in $\R^n$ with density greater than $2^{-n}$.
 
Let us now mention the known upper bounds. The very short paper \cite{Blichfeldt29} proves the inequality
\[ \delta_n \leq \frac{n+2}{2} \Bigl( \frac{1}{\sqrt{2}} \Bigr)^n .\]
This result was substantially improved by \cite{KL78}
\begin{equation} \label{eq:KL}  \delta_n \leq ( \alpha+o(1)) ^n \end{equation}
where $\alpha \approx 0.66 < \frac{1}{\sqrt{2}}$ is an explicit constant. Better estimates of the $o(1)$ term were subsequently given by \cite{CZ14} and \cite{SZ24}.
 
This paper is primarily concerned with lower bounds.
The ``trivial'' bound $2^{-n}$ was first improved by \cite{Rogers47} to
\begin{equation}
\label{eq:rogers}
 \delta_n^L \geq c n  \cdot 2^{-n}. \end{equation}
Here and throughout this paper, the letters $c$ and $C$ denote unspecified positive reals that do not depend on the dimension.
Other approaches yielding better estimates for the constant~$c$ were subsequently proposed by \cite{DR47}, then \cite{Ball92} and \cite{Vance11}.
 
The next advance is due to \cite{Venkatesh13}, who showed,
by considering random lattices with algebraic symmetries, that the inequality
\[  \delta_n^L \geq c n (\log \log n) \cdot 2^{-n} \]
holds for infinitely many $n$. A different approach, combinatorial in spirit and not using lattices \cite{CJMS23} yielded for every dimension the lower bound
\[ \delta_n \geq \Bigl( \frac{1}{2} - o(1) \Bigr) n \log n \cdot 2^{-n} .\]
A better result, which we present here in detail, is the following.
\begin{theorem}[\cite{Klartag26}] \label{theo:main}
There exists a constant $c>0$ such that for every integer $n \geq 1$,
\[ \delta_n \geq \delta_n^L \geq c n^2 \cdot 2^{-n} .\]
\end{theorem}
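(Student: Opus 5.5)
The plan is to reduce the statement to a problem about ellipsoids and then build a large ellipsoid by a randomized growth process. Suppose $L\subset\R^n$ is a lattice of covolume $1$, and $\mathcal{E}=A\,B(0,1)$ with $A$ positive definite is an ellipsoid whose interior contains no nonzero point of $L$. Then $A^{-1}L$ has minimal norm at least $1$, so the balls $B(x,1/2)$, $x\in A^{-1}L$, form a lattice packing. Its density is
\[
\frac{\vol(B(0,1/2))}{\covol(A^{-1}L)} \;=\; 2^{-n}\,\vol(\mathcal{E}).
\]
Hence it suffices to find a lattice $L$ of covolume $1$ and a centered ellipsoid $\mathcal{E}$ with $\vol(\mathcal{E})\geq c n^2$ and $\mathcal{E}^\circ\cap L=\{0\}$.

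I will take $L$ uniformly random (Haar measure on $\gSL_n(\R)/\gSL_n(\Z)$) and use the Siegel mean value formula together with Rogers' higher moment formulas, as in Section~\ref{section:rogers}. For fixed $L$, the expected number of lattice points in a fixed body of volume $V$ equals $V$.
\begin{itemize}
\item \textbf{Step 1 (initial ellipsoid).} Use the Rogers argument behind \eqref{eq:rogers}. With positive probability, a ball $\mathcal{E}_0$ of volume $cn$ contains no nonzero lattice points in its interior. Moreover, only $O(n)$ pairs $\pm v$ lie in a thin shell $(1+1/n)\mathcal{E}_0\setminus \mathcal{E}_0$.
\item \textbf{Step 2 (evolution).} Run a stochastic process $\mathcal{E}_t=A_t B(0,1)$. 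The increment $dA_t$ (or $d(A_t^{-2})$) is a Brownian motion on symmetric matrices plus a drift increasing $\log\det A_t$. The increment is projected onto the subspace of symmetric matrices satisfying the linearized constraints $d\langle A_t^{-2}v,v\rangle\geq 0$ for the lattice points $v$ currently on $\partial\mathcal{E}_t$.
\end{itemize}
Since the space of symmetric matrices has dimension $n(n+1)/2$, while the number of active constraints should stay of order $n$, the motion keeps roughly $n^2/2$ free directions. In these directions, Itô's formula for $\log\det$ should give $\log\vol(\mathcal{E}_t)$ a positive drift. I would aim to show that this drift persists until the volume reaches $cn^2$. Heuristically, an ellipsoid of volume $V$ touches about $V$ lattice points near its boundary, so the constraints only saturate the $n^2$ degrees of freedom once $V\sim n^2$.

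The main obstacle is justifying that the lattice continues to look random relative to $\mathcal{E}_t$, even though $A_t$ depends on $L$. I need to bound the number of lattice points that enter the thin boundary shell at each time, and so control how many constraints are active. My first attempt will be a union/moment bound. For any fixed ellipsoid of volume $V$, Rogers' second moment formula bounds the number of points in a shell of relative width $\varepsilon$ by $O(\varepsilon V n)$ in expectation. I would make this uniform over the process by exploiting its martingale structure: average over the randomness of $L$ at each time and apply the optional stopping theorem to $\log\det A_t$ at the first time the number of active constraints exceeds $Cn$.

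If the constraint counting works, the expected increase of $\log\vol$ up to this stopping time is at least $\log(cn^2)-\log(cn)$. Some realization then achieves volume $cn^2$, and the density bound follows from the reduction above.
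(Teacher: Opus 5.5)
You have the right overall framework, which matches the paper's: reduce to lattice-free ellipsoids, take a random lattice, evolve the ellipsoid stochastically, and let the Itô correction of $\log\det$ act on roughly $n^2/2$ free directions. But there is a genuine gap at the step you call the main obstacle, which is the heart of the proof, and the mechanism you propose for it fails. A moment bound for a \emph{fixed} ellipsoid says nothing about $\mathcal{E}_t$. That ellipsoid is a function of the lattice $\Lambda$ (your $L$) and, by construction, rests on lattice points of $\Lambda$. ``Averaging over $\Lambda$ at each time'' is meaningless for such a dependent object, and a union bound over an $\frac{n(n+1)}{2}$-parameter family of ellipsoids is far too costly. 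Your stopping rule also contradicts your own heuristic. If an ellipsoid of volume $V$ touches about $V$ lattice points, then growing from volume $cn$ to $cn^2$ forces the number of active constraints up to order $n^2$. Stopping the first time it exceeds $Cn$ therefore halts the process at volume $O(n)$, a constant-factor gain over Rogers. What must be controlled is the time integral of the number of active constraints, not its supremum. In the paper, with $T=16\log n/n^2$, the Itô term contributes $-\frac{n^2}{4}T=-4\log n$ to the expected $\log\det$ of the quadratic form defining $\mathcal{E}_T$. The active constraints cost at most $\int_0^T K_t(\Lambda)\,\mathrm{d}t=O(1)$, although $K_T(\Lambda)$ itself may be of order $n^2$.

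The missing idea is to reverse the order of the two randomizations. Fix $\Lambda$, and bound for each individual $x\in\Lambda^*$ the probability, over the Brownian motion alone, that $x$ is active at time $t$. This needs three features that your process lacks or leaves ambiguous. First, no drift: the quadratic form $A_t$ (your $A_t^{-2}$), in which the constraints $\scalar{A_tx}{x}\geq 1$ are linear, is a martingale, and all the volume gain comes from the Itô term of the concave function $\log\det$. An inflating drift pushes $\scalar{A_tx}{x}$ down deterministically and ruins the comparison below. Second, equality constraints: the increment is projected onto the subspace $\{M\in\Sym_n : \scalar{Mx}{x}=0 \text{ for all } x\in\Lambda^*\cap\partial\mathcal{E}_t\}$. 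Your conditions $\mathrm{d}\scalar{A_t^{-2}v}{v}\geq 0$ cut out a cone, and a process allowed to leave faces has a non-monotone active set. With the subspace projection, the exploration stays in every face it reaches, so ``$x$ is active at time $t$'' is exactly the first-passage event $\inf_{s\leq t}\scalar{A_sx}{x}\leq 1$. Third, a starting point independent of $\Lambda$ that is lattice-free with probability $\geq\frac12$: the unit ball with $\covol(\Lambda)=\omega_n$, so restricting to this event costs only a factor $2$ in Siegel's formula. A starting ellipsoid of volume $cn$ is either built from $\Lambda$ (as Rogers' is) or lattice-free only on a rare event, so $\Lambda$ is no longer Haar-distributed relative to it. Your claim of $O(n)$ points in its $(1+1/n)$-shell is also unproved, and the shells that matter have relative width of order $\log n/n$ anyway. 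With these three features, $\scalar{A_sx}{x}$ is a continuous martingale with quadratic variation at most $s\|x\|^4$. Dambis--Dubins--Schwarz plus the reflection principle then give $\P(x\in\partial\mathcal{E}_t)\leq f_t(x)=2\Phi\bigl(t^{-1/2}(1-\|x\|^{-2})\bigr)$. Summing over $x$, restricted to $D_t$ on the high-probability event $\|A_t-\Id\|_{\mathrm{op}}\leq 3\sqrt{tn}$, bounds the expected number of active constraints by $K_t(\Lambda)$, a lattice sum of a fixed radial function. Siegel's first-moment formula alone then yields $\E[K_t(\Lambda)]\leq(2+\e_n)e^{n^2t/8}$, with no higher Rogers moments and no uniformity over ellipsoids needed.
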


For an AI-generated argument showing the lower bound $\delta_n^L \geq c n^2 (\log \log n) 2^{-n}$ for infinitely many $n$, see the very recent preprint \cite{AGZ26}.

This progress may seem modest given that an exponential gap remains between Klartag's lower bound and the best upper bound~\eqref{eq:KL}. Note however that \cite{Venkatesh13} conjectures that, up to logarithmic corrections, the quantity $2^n \delta_n^L$ is bounded above by a polynomial in~$n$. We now know that such a polynomial must have degree $\geq 2$.
 
Section~\ref{section:rogers} presents Rogers' argument giving the lower bound~\eqref{eq:rogers}. The idea is to construct, given a random lattice, an ellipsoid of large volume compatible with the lattice, i.e., whose interior contains no nonzero lattice point. This ellipsoid is determined by the $n$ successive minima of the lattice.
 
Klartag refines these ideas by using the probabilistic method \emph{twice}: within a random lattice, he considers a random compatible ellipsoid. The definition of this random ellipsoid relies on a stochastic process whose analysis involves all lattice points of sufficiently small norm, and not only the successive minima.
 
This Brownian exploration process can be defined in any closed convex set, which we do in Section~\ref{sec:exploration}. Klartag's proof is detailed in Section~\ref{sec:klartag}.

\section{Rogers' Argument} \label{section:rogers}
 
A \emph{lattice} is a set of the form $\Lambda = A ( \Z^n)$ for $A \in \gGL_n(\R)$. The number $\covol(\Lambda) \coloneqq |\det(A)|$ does not depend on the choice of the matrix $A$ and is called the \emph{covolume} of the lattice $\Lambda$. The set of lattices in $\R^n$ with covolume $1$ is identified with $\gSL_n(\R) / \gSL_n(\Z)$. The Haar measure on $\gSL_n(\R)$ induces on this set a finite measure (see \cite[Section~2.2]{Benoist09}) which can be normalized to have total mass $1$.
 
Thus, for every real number $v>0$, the set of lattices in $\R^n$ with covolume $v$ admits a unique probability measure invariant under the action of $\gSL_n(\R)$. Throughout this paper, the expression \emph{random lattice of covolume $v$} refers to this probability measure.
 
We will make central use of the following integration formula, due to \cite{Siegel45}. If $\Lambda \subset \R^n$ is a random lattice of covolume $v$, then for every function $f \colon  \R^n \to \R$ integrable with respect to the Lebesgue measure
\begin{equation} \label{eq:siegel}
\E \Bigl[ \sum_{x \in \Lambda^*} f(x) \Bigr] = \frac{1}{v} \int_{\R^n} f
\end{equation}
where $\Lambda^* \coloneqq \Lambda \setminus \{0\}$.
 
Let $B_n \coloneqq B(0,1)$ be the open unit ball of $\R^n$ and
$\omega_n \coloneqq \pi^{n/2} / \Gamma \left( \frac{n}{2} +1 \right)$ its volume. The \emph{minimum norm} of a lattice $\Lambda$ is the quantity $r = \min \{ \|x\| \, : \, x \in \Lambda^*\}$. The balls $\{B(x,r/2) \, : \, x \in \Lambda \}$ form a sphere packing of density $r^n \omega_n/2^n\covol(\Lambda)$.

The probabilistic method provides an immediate proof of the Minkowski--Hlawka inequality $\delta_n^L \geq 2^{-n}$. If~$\Lambda$ is a random lattice of covolume $\omega_n$, Siegel's formula~\eqref{eq:siegel} applied to the function $f = {\bf 1}_{B_n}$ gives
\[ \E \left[ |B_n \cap \Lambda^*| \right] = 1. \]
Since the number $|B_n \cap \Lambda^*|$ is even, it equals $0$ with probability $\geq \frac 12$. Thus, there exists a lattice
 $\Lambda$ in $\R^n$ with covolume $\omega_n$ and minimum norm $\geq 1$, which implies that the family $\{ B(x,\frac{1}{2}) \, : \, x \in \Lambda \}$ is a sphere packing of density $\geq 2^{-n}$.
 
We now detail Rogers' proof of the inequality $\delta_n^L \geq cn2^{-n}$. We call an \emph{ellipsoid} the image of the open unit ball under an invertible linear map. The starting point is the observation that if $\Lambda \subset \R^n$ is a lattice and $\mathcal{E}$ is an ellipsoid containing no point of $\Lambda^*$, then
\begin{equation} \label{eq:ellipsoidpacking} \delta_n^L \geq \frac{\vol (\mathcal{E})}{\covol (\Lambda)} 2^{-n}.\end{equation}
Indeed, if $\mathcal{E} = T(B_n)$ for $T \in \gGL_n(\R)$, then the family $\left\{ B(x,1) \, : \, x \in 2 T^{-1}\Lambda \right\}$ is a sphere packing of density $\omega_n/\covol(2T^{-1}\Lambda)=2^{-n}\vol(\mathcal{E})/\covol(\Lambda)$.
 
Let $\Lambda$ be a random lattice of covolume $v \coloneqq \frac{\omega_n}{n}$.
Consider the function
\[ f(x) \coloneqq \Bigl( \log \frac{1}{\|x\|} \Bigr)_+  = \Bigl( \log \frac{1}{\|x\|} \Bigr) {\bf 1}_{B_n}(x).\]
By Siegel's formula~\eqref{eq:siegel}, we have
\[ \E \Bigl[ \sum_{x \in \Lambda^*} f(x) \Bigr] = \frac 1v \int_{\R^n} f = \frac{n}{\omega_n} \int_{B_n} \log \frac{1}{\|x\|} \, \mathrm{d}x.\]
This integral is computed in polar coordinates as
\[ \frac{1}{\omega_n} \int_{B_n} \log \frac{1}{\|x\|} \, \mathrm{d}x = \frac{\int_0^1 \log(1/r)r^{n-1}\,\mathrm{d}r}{\int_0^1 r^{n-1}\,\mathrm{d}r} = \frac{1/n^2}{1/n} = 1/n.\]
It follows that
\[ \E \Bigl[ \sum_{x \in \Lambda^*} f(x) \Bigr] = 1\]
and in particular, there exists a lattice $\Lambda$ of covolume $v$ satisfying $\sum_{x \in \Lambda^*} f(x) \leq 1$. Fix such a lattice.
 
For $1 \leq i \leq n$, set
\[ \lambda_i \coloneqq  \inf \{ \lambda \, : \, \dim \left( \mathrm{Vect} ( \Lambda \cap \lambda B_n ) \right) \geq i \}. \]
The numbers $\lambda_1 \leq \cdots \leq \lambda_n$ are called the \emph{successive minima} of the lattice $\Lambda$. There exist elements $y_1,\dots,y_n$ of $\Lambda$ forming a basis of $\R^n$ and satisfying $\|y_i\|=\lambda_i$. We have
\begin{align*}
-\log (\lambda_1\cdots \lambda_n) & =  \log \frac{1}{\|y_1\|} + \dots + \log \frac{1}{\|y_n\|} \\
& \leq  f(y_1) + \dots + f(y_n) \\
& \leq  \sum_{y \in \Lambda^*} f(y) =  1.
\end{align*}
We deduce the inequality $\lambda_1 \cdots \lambda_n \geq 1/e$. Let $(e_1,\dots,e_n)$ be an orthonormal basis satisfying, for $1 \leq i \leq n$,
\[ \mathrm{Vect} (e_1,\dots,e_i) = \mathrm{Vect} (y_1,\dots,y_i) .\]
Consider the ellipsoid
\[ \mathcal{E} = \Bigl\{ \sum_{i=1}^n \alpha_i e_i \, : \, \sum_{i=1}^n \frac{\alpha_i^2}{\lambda_i^2} < 1 \Bigr\} \]
and let $x= \sum_{i=1}^n \alpha_i e_i$ be an element of $\Lambda^*$. Let $j$ be the maximal index such that $\alpha_j \neq 0$. We have $\|x\| \geq \lambda_j$, and moreover
\[ \sum_{i=1}^n \frac{\alpha_i^2}{\lambda_i^2} = \sum_{i=1}^j \frac{\alpha_i^2}{\lambda_i^2}
\geq \frac{\|x\|^2}{\lambda_j^2} \geq 1.
\]
Thus the ellipsoid $\mathcal{E}$ contains no point of $\Lambda^*$. In view of~\eqref{eq:ellipsoidpacking}, this implies the inequality
\[ \delta_n^L \geq \frac{\vol (\mathcal{E})}{v} 2^{-n}= n 2^{-n} \lambda_1 \cdots \lambda_n \geq e^{-1}n2^{-n} .\]
This completes the proof of the lower bound~\eqref{eq:rogers}.

\section{Brownian Exploration of a Closed Convex Set} \label{sec:exploration}
 
Throughout this section, we fix a Euclidean space $E$ of dimension $n$ and a closed convex subset $\mathscr{C} \subset E$ containing no affine line. We also fix a \emph{standard Brownian motion} $(B_t)_{t \geq 0}$ in $E$, i.e., a stochastic process with values in $E$ such that, for every orthonormal basis $(e_1,\dots,e_n)$ of $E$, the $n$ scalar stochastic processes $(\scalar{B_t}{e_i})_{t \geq 0}$, for $1 \leq i \leq n$, are independent Brownian motions. Let $(\mathcal{F}_t)_{t \geq 0}$ be the canonical (complete) filtration of $(B_t)$. That is, for $t \geq 0$, $\mathcal{F}_t$ is the smallest $\sigma$-algebra containing all negligible sets and making measurable the random variables $(B_s)_{0 \leq s \leq t}$.
 
\medskip
 
We will define a stochastic process with values in $\mathscr{C}$ which we call the \emph{Brownian exploration} of $\mathscr{C}$. Informally, the Brownian exploration of $\mathscr{C}$ is constructed by conditioning the Brownian motion $(B_t)$ to belong to a face of $\mathscr{C}$ reached at an earlier time. This process is a martingale that almost surely reaches an extreme point of $\mathscr{C}$ in finite time.
 
\medskip
 
For every $x \in \mathscr{C}$, the set
\begin{equation}
\label{eq:defVxC}
V(x,\mathscr{C}) \coloneqq \{ y \in E : \exists \e > 0 \ : \ |t| < \e \Longrightarrow x+ty \in \mathscr{C} \} \end{equation}
is a vector subspace of $E$. The equality $V(x,\mathscr{C})=E$ holds if and only if $x$ is an interior point of $\mathscr{C}$, and the equality $V(x,\mathscr{C})=\{0\}$ holds if and only if $x$ is an extreme point of $\mathscr{C}$. Within the vector subspace $V(x,\mathscr{C})$, the point $0$ is interior to the convex set $(\mathscr{C}-x) \cap V(x,\mathscr{C})$.
 
\medskip
 
\begin{proposition} \label{prop:exploration}
Let $x_0$ be an interior point of $\mathscr{C}$. There exists a continuous martingale~$(X_t)$ adapted to the filtration $(\mathcal{F}_t)$ and satisfying the stochastic differential equation
\begin{equation} \label{eq:EPDS} X_0 = x_0,  \ \ \ \mathrm{d}X_t = p_t(\mathrm{d}B_t)\end{equation}
where $p_t$ denotes the orthogonal projection onto the subspace $V(X_t,\mathscr{C})$, with the following properties: almost surely,
\begin{enumerate}
\item[(a)] for every $t \geq 0$, we have $X_t \in \mathscr{C}$,
\item[(b)] there exist an extreme point $z$ of $\mathscr{C}$ and a real $T >0$ \textup{(}both random\textup{)} such that $X_t=z$ for all $t \geq T$,
\item[(c)] for all $0 \leq s \leq t$, we have  $V(X_t,\mathscr{C}) \subset V(X_s,\mathscr{C})$.
\end{enumerate}
\end{proposition}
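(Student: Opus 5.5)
The process is built piecewise, by induction on the dimension of the face currently being explored. Set $\tau_0 = 0$ and $V_0 = E$. Suppose we have a stopping time $\tau_k$ with $X_{\tau_k}$ in the relative interior of a face $F_k$ of $\mathscr{C}$, and let $V_k = V(X_{\tau_k},\mathscr{C})$ be the direction of the affine hull of $F_k$, with $p^{(k)}$ the orthogonal projection onto $V_k$. For $t \geq \tau_k$ we set
\[ X_t = X_{\tau_k} + p^{(k)}(B_t - B_{\tau_k}) \]
up to the stopping time
\[ \tau_{k+1} = \inf\{t \geq \tau_k : X_t \notin \mathrm{relint}(F_k)\}. \]
By the strong Markov property, $p^{(k)}(B_{\tau_k+s}-B_{\tau_k})$ is a standard Brownian motion in $V_k$, independent of $\mathcal{F}_{\tau_k}$. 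On $[\tau_k,\tau_{k+1})$ the point $X_t$ stays in $\mathrm{relint}(F_k)$, so $V(X_t,\mathscr{C}) = V_k$ and $p_t = p^{(k)}$. Hence \eqref{eq:EPDS} holds on this interval, and by continuity $X_{\tau_{k+1}} \in \partial_{\mathrm{rel}} F_k \subset \mathscr{C}$.

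At $\tau_{k+1}$ the point $X_{\tau_{k+1}}$ lies in the relative boundary of $F_k$, so it belongs to the relative interior of a proper face $F_{k+1} \subsetneq F_k$. Thus $V_{k+1} = V(X_{\tau_{k+1}},\mathscr{C})$ is a subspace of $V_k$ of strictly smaller dimension. This inclusion uses the general fact that $x \in F$ with $F$ a face implies $V(x,\mathscr{C}) = V(x,F) \subset \mathrm{dir}(F)$. The dimension therefore drops at each step, and after at most $n$ steps $V_k = \{0\}$. At that point $X$ sits at an extreme point $z$ and is frozen, since $p_t = 0$. Continuity, adaptedness and properties (a) and (c) follow directly from the construction. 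The martingale property holds because $X$ is a stochastic integral of a bounded predictable integrand ($\|p_t\| \leq 1$), which gives a true martingale.

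The main point to check is that every $\tau_{k+1}$ is finite almost surely; this gives (b) with $T = \tau_m$ for the last index $m$. Since $\mathscr{C}$ contains no affine line, neither does $F_k$. For $\dim V_k \geq 1$, a Brownian motion in $V_k$ started in the relative interior of the line-free closed convex set $F_k$ leaves it in finite time almost surely. To see this, first find a linear functional $\ell$ on $V_k$ bounded above on $F_k - X_{\tau_k}$. Such an $\ell$ exists: otherwise the recession cone of $F_k$ would be all of $V_k$, so $F_k$ would contain lines. Concretely, if the recession cone $K$ is not the whole space, pick $\ell$ with $\ell > 0$ somewhere on the polar side, i.e.\ take $\ell$ in the interior of the dual of $-K$ relative to a nonzero direction. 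Simpler still, choose $u \notin K \cup (-K)$ or $u \notin K$ and use a supporting hyperplane of $F_k$ in direction $u$. If that fails because $F_k$ is unbounded in every direction, separate a far point $X_{\tau_k} + Ru \notin F_k$ from $F_k$ to get an affine bound $\ell \leq c$ on $F_k$. Then $\ell(B)$ is a one-dimensional Brownian motion, which almost surely exceeds $c$ in finite time, forcing an exit. Measurability of $V(X_{\tau_{k+1}},\mathscr{C})$ and the choice of face as $\mathcal{F}_{\tau_{k+1}}$-measurable objects is routine: the face containing a point in its relative interior is a measurable function of that point.

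The step I expect to need the most care is showing that the pieced-together process satisfies \eqref{eq:EPDS} exactly at the switching times. This concerns the convention for $p_t$ at $t = \tau_{k+1}$: $V(X_t,\mathscr{C})$ is only upper-semicontinuous along the path. Since these are countably many stopping times, they form a Lebesgue-null set of times, so the integrand can be modified there without changing the stochastic integral. One must also check that $X_t$ never returns into the relative interior of a larger face. This holds because after $\tau_{k+1}$ the motion is confined to $\mathrm{aff}(F_{k+1})$, and $\mathrm{aff}(F_{k+1}) \cap \mathscr{C} = F_{k+1}$ for faces.
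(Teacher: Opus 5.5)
Your proof is essentially the paper's. It uses the same piecewise process $X_t = X_{\tau_k} + p^{(k)}(B_t - B_{\tau_k})$, stopped when $X_t$ leaves the relative interior of the current face (exactly the paper's condition that $V(X_t,\mathscr{C})$ changes), with a strict drop in $\dim V_k$ at each step and finiteness of each stopping time from a separating linear functional plus recurrence of one-dimensional Brownian motion. In that last step only your final argument is needed: since $\dim V_k \geq 1$ and $F_k$ contains no line, some point of $\mathrm{aff}(F_k)$ lies outside $F_k$, and Hahn--Banach separates it, which is precisely Lemma~\ref{lemme:hahn-banach}. The recession-cone and supporting-hyperplane suggestions before it are either unjustified or can fail (for instance, $x \mapsto x_1$ is unbounded above on the line-free set $\{x_2 \geq x_1^2\}$ although $(1,0) \notin K \cup (-K)$), so drop them.
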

 
This process $(X_t)$ is called the \emph{Brownian exploration process of $\mathscr{C}$ starting from $x_0$}. Its construction uses the following lemma recursively.
 
\begin{lemma} \label{lemme:hahn-banach}
Let~$V$ be a Euclidean space and $K \subset V$ a closed convex set, distinct from~$V$ and containing~$0$ in its interior. Let $(\beta_t)_{t \geq 0}$ be a standard Brownian motion in~$V$. The random variable
\[ \tau \coloneqq \inf \{ t \geq 0 \, : \, \beta_t \not \in K \} \]
satisfies $0 < \tau < \infty$ almost surely. Moreover, almost surely, $\beta_\tau \in \partial K$.
\end{lemma}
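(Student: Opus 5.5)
We must show three things: $\tau>0$, $\tau<\infty$, and $\beta_\tau\in\partial K$, all almost surely. Two facts drive everything. First, $K$ is closed. Second, $K\neq V$ together with convexity gives a supporting half-space.

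\emph{Positivity of $\tau$.} Since $0$ is interior to $K$, there is $r>0$ with $B(0,r)\subset K$. The Brownian path is continuous and starts at $\beta_0=0$. Hence, almost surely, there is a time $\eta>0$ with $\|\beta_t\|<r$ for all $t\le\eta$, and therefore $\tau\ge\eta>0$.

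\emph{Finiteness of $\tau$.} This is the step where the hypothesis $K\neq V$ is used.
\begin{itemize}
\item Pick a point $a\notin K$. Since $K$ is closed and convex, the Hahn--Banach separation theorem gives a unit vector $u$ and a real $c$ with $\scalar{y}{u}\le c$ for all $y\in K$.
\item Since $0\in K$, we have $c\ge 0$. Thus $K$ lies in the half-space $H=\{y:\scalar{y}{u}\le c\}$.
\item The process $W_t=\scalar{\beta_t}{u}$ is a standard one-dimensional Brownian motion.
\item A standard one-dimensional Brownian motion satisfies $\limsup_{t\to\infty}W_t=+\infty$ almost surely (recurrence, or the law of the iterated logarithm). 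So almost surely there is a time $t$ with $W_t>c$.
\item At such a time $\beta_t\notin H\supset K$, so $\tau\le t<\infty$.
\end{itemize}

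\emph{Position at the exit time.} On the almost sure event where $0<\tau<\infty$ and the path is continuous, we argue from both sides of $\tau$.
\begin{itemize}
\item For $t<\tau$ we have $\beta_t\in K$. By continuity and closedness of $K$, $\beta_\tau=\lim_{t\uparrow\tau}\beta_t\in K$.
\item By definition of the infimum, there are times $t_k\downarrow\tau$ with $\beta_{t_k}\notin K$. Then $\beta_\tau=\lim\beta_{t_k}$ is a limit of points of the complement of $K$, so it is not an interior point of $K$.
\item Hence $\beta_\tau\in K\setminus\mathrm{int}\,K=\partial K$.
\end{itemize}

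There is no real obstacle here. The only nontrivial inputs are the separation theorem, which reduces the problem to one dimension, and the fact that a one-dimensional Brownian motion exceeds every level almost surely. It also helps to note that $\tau$ is a stopping time for the completed filtration, since $K^c$ is open; this is needed later, not for the lemma itself.
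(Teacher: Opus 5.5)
Your proof is correct and follows the paper's argument. Hahn--Banach gives a unit vector $u$ along which $K$ is bounded on one side, and the unboundedness of the one-dimensional Brownian motion $\langle \beta_t, u\rangle$ then forces an exit in finite time. You also write out the positivity of $\tau$ and the fact that $\beta_\tau \in \partial K$, which the paper dismisses in one line as consequences of path continuity.
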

 
\begin{proof}
Since $K \neq V$, by the Hahn--Banach theorem, there exists an element $z \in V$ of norm~$1$ such that the linear form $\scalar{z}{\cdot}$ is bounded below on $K$. The scalar stochastic process $(\scalar{z}{\beta_t})_{t \geq 0}$ is a Brownian motion and therefore satisfies almost surely $\inf \{ \scalar{z}{\beta_t} \, : \, t \geq 0 \}= - \iy$ (see \cite[Corollary 2.3]{LeGall13}), which implies the existence of a real $t$ such that $\beta_t \not \in K$. We have thus shown that $\tau$ is almost surely finite. The remaining properties follow from the fact that the function $t \mapsto \beta_t$ is almost surely continuous.
\end{proof}
 
\begin{proof}[Proof of Proposition~\ref{prop:exploration}]
Set $X_0=x_0$. To construct $(X_t)_{t \geq 0}$, define a sequence of stopping times
\[ 0 = \tau_0 < \tau_1 < \dots < \tau_n \]
such that if $t \in [\tau_i,\tau_{i+1}[$ then $V(X_t,\mathscr{C}) = V_i$, where $V_i \coloneqq V(X_{\tau_{i}},\mathscr{C})$.
Let~$\pi_i$ denote the orthogonal projection onto $V_i$. We define $(X_t)$ so that, for $t \in [\tau_i,\tau_{i+1}[$,
\[ X_t = X_{\tau_i}+ \pi_i(B_t-B_{\tau_{i}}) .\]
 
The construction proceeds by induction on $i \in \{0,\dots,n\}$. Assuming $\tau_0,\dots,\tau_{i}$ and $(X_t)_{0 \leq t \leq \tau_{i}}$ have been constructed, define $\tau_{i+1}$ as follows.
\begin{itemize}
\item If $V_i \neq \{0\}$, set
\[ \tau_{i+1} = \tau_{i} + \inf \{ t > 0 \, : \, V(X_{\tau_{i}} + \pi_i(B_{\tau_i+t} - B_{\tau_{i}}),\mathscr{C}) \neq V_i) \} .\]
Apply Lemma~\ref{lemme:hahn-banach} to the Euclidean space $V_i$, the convex set $K = (\mathscr{C}-X_{\tau_i}) \cap V_i$, and the process $(\beta_t)$ defined by $\beta_t \coloneqq \pi_i(B_{\tau_i+t} - B_{\tau_{i}})$, which is a standard Brownian motion in~$V_i$. Note also that since $\dim (V_i) \geq 1$ and $\mathscr{C}$ contains no line, the set $K$ is not equal to~$V_i$. We can therefore conclude that almost surely, $\tau_{i+1} < \iy $ and $\dim (V_{i+1}) < \dim(V_i)$.
\item If $V_i= \{0\}$, the induction terminates. Set $X_t = X_{\tau_i}$ for all $t \geq \tau_i$. Also set
(arbitrarily) $\tau_{k}=\tau_j+k$ for every integer $j < k \leq n$.
\end{itemize}
 
 By construction, $(X_t)$ is a continuous martingale satisfying condition (a), condition (b) with $T=\tau_n$ and condition (c) of Proposition~\ref{prop:exploration}. 
Moreover, for any $t>0$ and $0 \leq i < n$
\[ \int_{\tau_i \wedge t}^{\tau_{i+1} \wedge t} \, \mathrm{d}X_s = 
\int_{\tau_i \wedge t}^{\tau_{i+1} \wedge t} p_s(\mathrm{d}B_s).
\]
Summing over $i$ gives
\[ \int_{0}^{\tau_{i+1} \wedge t} \, \mathrm{d}X_s = 
\int_{0}^{\tau_{i+1} \wedge t} p_s(\mathrm{d}B_s).
\]
Since on the event $s> \tau_{i+1}$ we have $\mathrm{d}X_s =0$ and $p_s=0$, we obtain
\[ X_t - X_0 = \int_{0}^t p_s( \mathrm{d}B_s ) ,\]
which is the integrated form of \eqref{eq:EPDS}.
\end{proof}
 
\begin{remark} \label{rema:C1}
For $1 \leq i \leq n$, the law of the $i$-th stopping time $\tau_i$ defined in the preceding proof is a non-atomic measure on $\R_+$. Indeed, let $\e>0$ be small enough that the ball $B(x_0,\e)$ is contained in $\mathscr{C}$, and let $\tau' \coloneqq \inf \{ t>0 \, :\, \|B_t-x_0\| \geq \e \}$ be the corresponding exit time. By the strong Markov property, the law of $\tau_i$ is the convolution of the law of $\tau'$ with the law of the $i$-th stopping time of the exploration process of $\mathscr{C}$ starting from a point chosen uniformly on the sphere $\partial B(x_0,\e)$. Since $\tau'$ has a non-atomic law \cite{CT62}, so does~$\tau_i$.
 
We will use this remark to prove Propositions~\ref{prop:Ito} and \ref{prop1}. However, it is not needed to prove Theorem~\ref{theo:main}, provided one replaces these propositions by a weakened variant where the conclusion is in integrated form, such as the one used in inequality~\eqref{eq:final}.
\end{remark}

In the remainder of this section, fix $x_0 \in \mathscr{C}$ and let $(X_t)$ denote the Brownian exploration process of $\mathscr{C}$ starting from $x_0$. As in Proposition~\ref{prop:exploration}, $p_t$ denotes the orthogonal projection onto $V(X_t,\mathscr{C})$. We will prove two coupling inequalities illustrating the idea that Brownian exploration ``diffuses more slowly'' than standard Brownian motion.
 
\begin{proposition} \label{prop:maurey}
For any norm $\|\cdot\|$ on $E$, for all $t>0$ and $\lambda > 0$,
\[ \P ( \| X_t-x_0\| \geq \lambda) \leq 2 \, \P ( \|B_t\| \geq \lambda). \]
\end{proposition}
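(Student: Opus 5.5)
The plan is a reflection coupling in the spirit of Maurey's argument for martingales with subordinated quadratic variation. Set $Y_t \coloneqq \int_0^t p_s(\mathrm{d}B_s) = X_t - x_0$ and $Z_t \coloneqq \int_0^t (\Id - p_s)(\mathrm{d}B_s)$, where $\Id-p_s$ is the orthogonal projection onto $V(X_s,\mathscr{C})^\perp$. Then $B_t = Y_t + Z_t$. Also set $W_t \coloneqq Y_t - Z_t = \int_0^t (2p_s - \Id)(\mathrm{d}B_s)$.

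\textbf{Step 1: $W$ is a standard Brownian motion.} Since $2p_s-\Id$ is an orthogonal reflection, hence an adapted orthogonal-matrix-valued process, Lévy's characterization shows this. The quadratic covariation is $\int_0^t (2p_s-\Id)(2p_s-\Id)^*\,\mathrm{d}s = t\,\Id$. So $W_t$ has the same law as $B_t$, and $Y_t = \tfrac12(B_t + W_t)$.

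\textbf{Step 2: obtain the tail bound.} By the triangle inequality for the norm, $\|Y_t\| \leq \tfrac12(\|B_t\|+\|W_t\|)$. Hence on the event $\{\|Y_t\|\geq\lambda\}$ we have either $\|B_t\|\geq \lambda$ or $\|W_t\|\geq\lambda$. A union bound then gives
\[ \P(\|X_t-x_0\|\geq\lambda) \leq \P(\|B_t\|\geq\lambda)+\P(\|W_t\|\geq\lambda) = 2\,\P(\|B_t\|\geq\lambda). \]

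\textbf{Main point to check: measurability of $p_s$.} The process $p_s$ must be adapted and measurable so that the stochastic integrals make sense. This follows from the construction in the proof of Proposition~\ref{prop:exploration}: $p_s$ is piecewise constant, equal to $\pi_i$ on $[\tau_i,\tau_{i+1}[$, with $\tau_i$ stopping times and $\pi_i$ being $\mathcal{F}_{\tau_i}$-measurable. Given this, Lévy's theorem applies directly.

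If one prefers to avoid Lévy's theorem, there is an alternative. Build $W$ explicitly by reflecting the increments of $B$ on each interval $[\tau_i,\tau_{i+1}[$ through $V_i$. Then use the strong Markov property together with the rotation invariance of Brownian motion to see inductively that $W$ is Brownian.

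Note that no property of $\mathscr{C}$ beyond the adaptedness of $p_s$ is used. The argument is exactly the classical fact that a martingale differentially subordinate to, and orthogonal-complement-coupled with, Brownian motion has tails at most twice those of Brownian motion.
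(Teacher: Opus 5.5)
Your proof is correct and is essentially the paper's argument: the paper also writes $X_t - x_0 = \tfrac12(B_t + B'_t)$ with $B'_t$ the stochastic integral of a reflection against $\mathrm{d}B_s$, shows $B'$ is a standard Brownian motion by L\'evy's characterization (its quadratic variation is $t\,\Id$), and concludes with the triangle inequality and a union bound. Your sign $2p_s - \Id$ is the one consistent with $X_t - x_0 = \tfrac12(B_t + W_t)$; the paper writes $\Id - 2p_s$, which makes no difference to the argument.
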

 
\begin{proof}
Consider the process $(B'_t)$ defined by the relation $X_t = x_0 + \frac{1}{2}\left( B_t + B'_t\right)$. Since for every $t \geq 0$
\[ X_t = x_0 + \int_0^t p_s(\mathrm{d}B_s)\]
and
\[ B_t = \int_0^t \mathrm{Id}(\mathrm{d}B_s),\]
by linearity of the stochastic integral we have
\[ B'_t = \int_0^t (\mathrm{Id}-2p_s)(\mathrm{d}B_s).\]
The quadratic variation matrix of the process $(B'_t)$ equals
\[ [B']_t = \int_0^t {^t}(\mathrm{Id}-2p_s)(\mathrm{Id}-2p_s) \mathrm{d}s = t \mathrm{Id}\]
since $p_s$ is an orthogonal projection. By a theorem of L\'evy \cite[Theorem 5.4]{LeGall13}, this implies that $(B'_t)$ is a standard Brownian motion in $E$. We therefore have, writing $\|X_t-x_0\| \leq \frac{1}{2} (\|B_t\|+\|B'_t\|)$,
\begin{align*}
\P  (\| X_t-x_0\| \geq \lambda) & \leq \P  \left(\| B_t \| + \|B'_t\| \geq 2\lambda \right) \\
& \leq \P  ( \| B_t \| \geq \lambda) + \P  ( \| B'_t \| \geq \lambda)
\end{align*}
and the result follows.
\end{proof}
 
\begin{proposition} \label{prop:couplage}
Let $z \in  E$ and $a$ be a real number.
Let
\[ \tau_1 \coloneqq \inf \{ t \geq 0 \, : \, \scalar{z}{X_t} = a \} \]
and
\[ \tau_2 \coloneqq \inf \{ t \geq 0 \, : \, \scalar{z}{x_0+B_t} = a \}. \]
Then, for every real $T > 0$,
\[ \P(\tau_1 \geq T) \geq \P (\tau_2 \geq T) .\]
\end{proposition}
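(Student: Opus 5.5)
The plan is to reduce the statement to a one-dimensional time change. Set $Y_t \coloneqq \scalar{z}{X_t}$. By Proposition~\ref{prop:exploration}, $X_t = x_0 + \int_0^t p_s(\mathrm{d}B_s)$. Since $p_s$ is self-adjoint, this gives $Y_t = \scalar{z}{x_0} + \int_0^t \scalar{p_s z}{\mathrm{d}B_s}$. So $(Y_t)$ is a continuous scalar martingale with quadratic variation
\[ A_t \coloneqq [Y]_t = \int_0^t \|p_s z\|^2 \, \mathrm{d}s. \]
Because $p_s$ is an orthogonal projection, $\|p_s z\| \leq \|z\|$, and hence $A_t \leq \|z\|^2 t$ for every $t$. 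On the other side, $\scalar{z}{x_0+B_t} = \scalar{z}{x_0} + \|z\| W'_t$, where $W'$ is a standard scalar Brownian motion, i.e.\ a Brownian motion run at the faster clock $\|z\|^2 t$.

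First I would dispose of the trivial cases. If $z=0$, or if $\scalar{z}{x_0}=a$, then $\tau_1$ and $\tau_2$ coincide deterministically (both are $\infty$, or both are $0$), and the inequality is an equality. Otherwise, I set $b \coloneqq a - \scalar{z}{x_0} \neq 0$.

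Next I would invoke the Dambis--Dubins--Schwarz theorem. There is a standard Brownian motion $(W_u)_{u\geq 0}$, possibly on an enlarged probability space, such that $Y_t - \scalar{z}{x_0} = W_{A_t}$ for all $t \geq 0$. The main technical point is that $A_\infty$ may be finite, since the exploration freezes at an extreme point after time $T$. The standard remedy is to continue $W$ after time $A_\infty$ by an independent Brownian motion; the representation $Y_t - \scalar{z}{x_0} = W_{A_t}$ still holds, and $W$ is a genuine Brownian motion on $[0,\infty)$.

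With this representation the comparison of events is immediate. Since $t\mapsto A_t$ is continuous and nondecreasing with $A_0=0$, the path $(Y_t)_{t\in[0,T]}$ takes exactly the values of $(\scalar{z}{x_0}+W_u)_{u\in[0,A_T]}$. Hence
\[ \{\tau_1 \geq T\} \supseteq \{\tau_1 > T\} = \{ W_u \neq b \ \ \forall u \in [0,A_T] \} \supseteq \{ W_u \neq b \ \ \forall u \in [0,\|z\|^2T] \}, \]
where the last inclusion uses $A_T \leq \|z\|^2 T$. By scaling, the probability of the last event equals $\P(W'_t \neq b/\|z\| \ \forall t\in[0,T]) = \P(\tau_2 > T)$. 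Finally, $\P(\tau_2 = T)=0$, because the hitting time of a nonzero level by Brownian motion has a density. Therefore $\P(\tau_2 > T) = \P(\tau_2 \geq T)$, which concludes the proof.

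The step I expect to require the most care is the Dambis--Dubins--Schwarz representation when $A_\infty<\infty$, which needs the enlargement of the probability space. Should one wish to avoid it, a direct alternative is the reflection-type coupling used in Proposition~\ref{prop:maurey}. One would add an independent Brownian motion $\tilde\beta$ and define $\tilde W_t = Y_t - \scalar{z}{x_0} + \int_0^t \sqrt{\|z\|^2-\|p_s z\|^2}\,\mathrm{d}\tilde\beta_s$. By L\'evy's theorem this is a Brownian motion with variance $\|z\|^2 t$, but comparing its hitting times with those of $Y$ is less transparent. For that reason the time-change route is the one I would follow.
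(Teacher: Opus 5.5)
Your proof is correct and follows the paper's argument: the paper also represents $\scalar{z}{X_t}$ as a Dambis--Dubins--Schwarz time change $\beta_{[M]_t}$ of a Brownian motion (on an enlarged space), and compares the two hitting events using $[M]_t \leq \|z\|^2 t$. The only difference is cosmetic: the paper normalizes to $\|z\|=1$, $x_0=0$, $a>0$ and works on the half-open window $t<T$, which identifies $\{\tau_1\geq T\}$ directly and so avoids your extra step $\P(\tau_2=T)=0$.
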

 
\begin{proof}
Assume without loss of generality that $\|z\|=1$, $x_0=0$ and $a>0$. The scalar stochastic process $(M_t)$ defined by $M_t \coloneqq \scalar{z}{X_t}$ is a continuous martingale. Its quadratic variation satisfies
\[ [M]_t = \int_0^t \|p_s(z)\|^2 \, \mathrm{d}s \leq
\int_0^t \|z\|^2 \, \mathrm{d}s = t .\]
The Dambis--Dubins--Schwartz theorem implies (possibly on an enlarged probability space, see \cite[Theorem 5.5]{LeGall13}) the existence of a scalar Brownian motion~$(\beta_t)$ such that
\[ M_{t} = \beta_{[M]_t}\]
for every real $t \geq 0$. Since $[M]_t \leq t$, we have
\[ \forall t < T,\ \beta_{t} < a \ \Longrightarrow \ \forall t < T, \  M_t < a . \]
The right-hand event is $\{\tau_1 \geq T\}$, and the left-hand event has the same probability as $\{\tau_2 \geq T\}$ since $(\scalar{z}{B_t})$ is a scalar Brownian motion.
\end{proof}
 
Finally, we state a consequence of It\^o's formula.
 
\begin{proposition} \label{prop:Ito}
Let $\Omega$ be an open subset of $E$ containing $\mathscr{C}$, and let $F \colon  \Omega \to \R$ be a $C^2$ function.
Suppose that for every $t>0$,
\[ (i) \int_0^t \E[ \|\nabla F(X_s)\|^2] \, \mathrm{d}s < \iy,
 \ \ \
 (ii) \int_0^t \E[ \|\nabla^2 F(X_s)\|] \, \mathrm{d}s < \iy.
\]
Then the function $g : t \mapsto \E [F(X_t)]$ is $C^1$ and satisfies for every $t \geq 0$
\[ g'(t) = \frac{1}{2} \E \left[ \tr
(\nabla^2F(X_t) p_t ) \right]. \]
\end{proposition}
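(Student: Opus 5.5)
The plan is to apply Itô's formula to $F(X_t)$, using that $(X_t)$ is a continuous martingale with $\mathrm{d}X_t = p_t(\mathrm{d}B_t)$ by Proposition~\ref{prop:exploration}. Since $X_t \in \mathscr{C} \subset \Omega$ for all $t$, the composition $F(X_t)$ is well defined. The quadratic variation matrix of $X$ is
\[ \mathrm{d}[X]_t = p_t\, {^t}p_t \, \mathrm{d}t = p_t \, \mathrm{d}t, \]
because $p_t$ is an orthogonal projection. Itô's formula therefore gives, for every $t \geq 0$,
\[ F(X_t) = F(x_0) + \int_0^t \scalar{\nabla F(X_s)}{p_s(\mathrm{d}B_s)} + \frac12 \int_0^t \tr\bigl(\nabla^2 F(X_s)\, p_s\bigr)\, \mathrm{d}s . \]
This step needs no regularity of $s \mapsto p_s$; only progressive measurability is used, and that holds since $p_s$ is piecewise constant on the intervals $[\tau_i,\tau_{i+1}[$.

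Next I take expectations. The stochastic integral has integrand $p_s \nabla F(X_s)$, whose norm is at most $\|\nabla F(X_s)\|$. By hypothesis (i) it is therefore a true square-integrable martingale, not merely a local one, and its expectation vanishes. For the drift term, note that $|\tr(\nabla^2F(X_s) p_s)| \leq n\|\nabla^2 F(X_s)\|$ (operator norm). Hypothesis (ii) then lets me apply Fubini to obtain
\[ g(t) = F(x_0) + \frac12 \int_0^t h(s)\, \mathrm{d}s, \qquad h(s) \coloneqq \E\bigl[\tr(\nabla^2 F(X_s) p_s)\bigr], \]
where $h$ is locally integrable. So $g$ is absolutely continuous with $g' = h/2$ almost everywhere.

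The main obstacle is upgrading this to $g \in C^1$ with the formula at every $t$, which amounts to showing that $h$ is continuous. Fix $t_0$ and let $s \to t_0$. Almost surely, $X_s \to X_{t_0}$ by path continuity, so $\nabla^2 F(X_s) \to \nabla^2 F(X_{t_0})$ because $F$ is $C^2$. For the projections, $p_s$ is piecewise constant and changes only at the times $\tau_1,\dots,\tau_n$. By Remark~\ref{rema:C1}, each $\tau_i$ has a non-atomic law, so $\P(\tau_i = t_0)=0$. Hence almost surely $t_0$ is not a jump time and $p_s = p_{t_0}$ for $s$ near $t_0$, which gives almost sure convergence of the integrand.

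To pass to the limit in expectation I need a domination or uniform integrability argument, and this is where I expect the difficulty to lie, since (ii) only controls the integrand in time-integrated form. The first thing I would try is to localize: the random set $\{X_s : s \leq t_0+1\}$ is compact, so $\nabla^2 F$ is bounded on it. To make this uniform in $\omega$, I would use stopping times $\sigma_R = \inf\{s : \|X_s - x_0\| \geq R\}$ together with the tail bound of Proposition~\ref{prop:maurey}. This controls the contribution of large excursions, which is small by (ii).

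If a direct continuity proof turns out to be delicate, the fallback is the one mentioned in Remark~\ref{rema:C1}: keep the integrated identity $g(t)-g(0) = \frac12\int_0^t h$, which is all that is needed later.
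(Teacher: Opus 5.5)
Your first steps are the paper's proof. You apply It\^o's formula with $\mathrm{d}[X]_t = p_t\,\mathrm{d}t$, use hypothesis (i) to make the stochastic integral a true martingale, and use hypothesis (ii) with Fubini to get $g(t)=F(x_0)+\frac12\int_0^t h(s)\,\mathrm{d}s$. Remark~\ref{rema:C1} then shows that almost surely $t_0$ is not a jump time of $s\mapsto p_s$, so $\tr(\nabla^2F(X_s)p_s)\to\tr(\nabla^2F(X_{t_0})p_{t_0})$ almost surely as $s\to t_0$. The paper stops there and asserts that $h$ is continuous. You are right that this needs a domination or uniform integrability argument.

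\textbf{The gap.} The fix you sketch does not supply that argument. Hypothesis (ii) only controls $\E\|\nabla^2F(X_s)\|$ after integration in $s$. On the excursion event it gives $\int_0^{t}\E[\|\nabla^2F(X_s)\|\mathbf{1}_{\{\sigma_R\le t\}}]\,\mathrm{d}s\to 0$ as $R\to\infty$. What you need to pass to the limit in $h(s)$ is $\sup_{|s-t_0|\le\eta}\E[\|\nabla^2F(X_s)\|\mathbf{1}_{\{\sigma_R\le t\}}]\to 0$, and that does not follow. In fact (ii) does not even make $h(t_0)$ finite for every $t_0$, only for almost every $t_0$, whereas the statement claims the formula at every $t$. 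Proposition~\ref{prop:maurey}, or a maximal inequality for $\sigma_R$, only makes $\P(\sigma_R\le t)$ small. A small probability cannot control an integrand whose growth is unconstrained. So your argument proves $g'=h/2$ almost everywhere, but not $g\in C^1$ with the formula at every $t$.

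\textbf{What would close it.} You need uniform integrability of $\{\tr(\nabla^2F(X_s)p_s)\}_{|s-t_0|\le\eta}$, for instance a bound $\sup_{s\le t}\E\|\nabla^2F(X_s)\|^{1+\delta}<\infty$. Vitali's theorem together with your almost sure convergence then gives continuity of $h$. Hypotheses (i)--(ii) do not yield this directly; one would need a further argument, for example comparing the laws of $X_s$ for nearby $s$.

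It does hold in the one case the paper uses, $F=\log\det$:
\begin{itemize}
\item $\|\nabla^2F(A)\|\le n\|A^{-1}\|_{\mathrm{op}}^2\le n\|A\|_{\mathrm{op}}^{2(n-1)}/\det(A)^2$;
\item $\det$ is bounded below on $\mathscr{C}_\Lambda$ by Lemma~\ref{lemme:CL};
\item \eqref{eq:subgaussian} holds at each fixed time with constants uniform for $s\le t$, so $\sup_{s\le t}\E\|A_s\|_{\mathrm{op}}^k<\infty$ for all $k$.
\end{itemize}
Together these give $\sup_{s\le t}\E\|\nabla^2F(A_s)\|^2<\infty$.

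Your fallback, keeping only $g(t)-g(0)=\frac12\int_0^t h$, is exactly the weakened variant that Remark~\ref{rema:C1} says suffices for Theorem~\ref{theo:main}. It does not establish the proposition as stated.
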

 
\begin{proof}
It\^o's formula reads
\[ F(X_t) = F(x_0) + \int_0^t \scalar{\nabla F(X_s)}{p_s(\mathrm{d} B_s)} + \frac{1}{2} \int_0^t \left[ \tr (\nabla^2F(X_s) p_s )\right] \, \mathrm{d}s .\]
The process $(M_t)$ defined by $M_t \coloneqq \int_0^t \scalar{\nabla F(X_s)}{p_s(\mathrm{d} B_s)}$ is a local martingale. Hypothesis~(i), combined with the fact that $p_s$ is a contraction, implies that~$(M_t)$ is a martingale and therefore $\E[ M_t ] = 0$ for every~$t \geq 0$. Hypothesis (ii) ensures that $g(t)$ is well defined for $t \geq 0$ and equals
\[g(t) =  \frac{1}{2} \int_0^t \E \left[ \tr
(\nabla^2F(X_s) p_s ) \right]. \]
It follows from the proof of Proposition~\ref{prop:exploration} and Remark~\ref{rema:C1} that the discontinuities of the function $t \mapsto p_t$ are finite in number and their law is non-atomic. Since the function $t \mapsto \nabla^2 F (X_t)$ is almost surely continuous, this implies that the function $t \mapsto \E \left[ \tr
(\nabla^2F(X_t) p_t ) \right]$ is continuous. Thus $g$ is $C^1$ and satisfies the announced formula.
\end{proof}
 
\section{Klartag's Proof} \label{sec:klartag}
 
\subsection{Preliminaries}
 
The space $\Sym_n$ of real symmetric $n \times n$ matrices is a vector space of dimension $n(n+1)/2$. We equip it with the Euclidean structure given by the inner product $\scalar{A}{B} = \tr (AB)$. For $A$ and~$B$ in~$\Sym_n$, we write $A \preccurlyeq B$ when the matrix $B-A$ is positive semidefinite. To every symmetric positive definite matrix $A$ we associate the open ellipsoid
\[ \mathcal{E}_A \coloneqq \{ x \in \R^n \, : \, \scalar{Ax}{x} < 1 \}
 = A^{-1/2}(B_n) \]
which satisfies $\vol(\mathcal{E}_A) = \omega_n / \sqrt{\det(A)}$. Its boundary $\partial \mathcal{E}_A$ is the set of vectors $x \in \R^n$ satisfying $\scalar{Ax}{x}=1$.
 
Given a lattice $\Lambda \subset \R^n$, consider the closed convex set
\[ \mathscr{C}_\Lambda \coloneqq \{ A \in \Sym_n \, : \, \scalar{Ax}{x} \geq 1 \textnormal{ for all } x \in \Lambda^* \}.\]
 
A symmetric positive definite matrix~$A$ belongs to~$\mathscr{C}_\Lambda$ if and only if the ellipsoid~$\mathcal{E}_A$ contains no point of $\Lambda^*$. The following lemma describes the set $V(A,\mathscr{C}_\Lambda)$ defined by~\eqref{eq:defVxC}.
 
\begin{lemma} \label{lemme:CL}
Let $\Lambda$ be a lattice in $\R^n$ and $A \in \mathscr{C}_\Lambda$. Then $A$ is positive definite and satisfies the inequality
\begin{equation}
\label{eq:detlowerbound}
\sqrt{\det(A)} \geq \frac{\omega_n 2^{-n}}{ \delta_n^L \covol(\Lambda) } . \end{equation}
Moreover,
\[ V(A,\mathscr{C}_\Lambda) = \{ M \in \Sym_n \, : \,  \scalar{Mx}{x} =0 \textnormal{ for all } x \in \Lambda^* \cap \partial\mathcal{E}_A \}. \]
\end{lemma}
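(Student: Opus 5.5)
Three things need proof: $A$ is positive definite, the determinant bound \eqref{eq:detlowerbound} holds, and $V(A,\mathscr{C}_\Lambda)$ has the stated form.

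\emph{Positive definiteness.} Take $A \in \mathscr{C}_\Lambda$. First we show $A \succcurlyeq 0$. Suppose some unit vector $u$ has $\scalar{Au}{u} < 0$. The open set $\{x : \scalar{Ax}{x} < 0\}$ is a cone with nonempty interior. A cone with nonempty interior contains arbitrarily large balls, so by Minkowski's theorem (or simple density of $\Lambda$ in directions) it contains nonzero lattice points. Those points violate $\scalar{Ax}{x}\ge 1$, a contradiction.

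Next suppose $A \succcurlyeq 0$ is singular, with kernel $K \neq \{0\}$. For $\e>0$ and $R>0$, the set $\{x : \|x_K\| \le R,\ \|x_{K^\perp}\| \le \e\}$ is a convex symmetric cylinder. Its volume is of order $R^{\dim K}\e^{n-\dim K}$, which tends to $\infty$ as $R\to\infty$ for fixed $\e$. By Minkowski's convex body theorem it therefore contains some $x\in\Lambda^*$, and for this $x$ we get $\scalar{Ax}{x} \le \|A\|\e^2 < 1$ once $\e$ is small. This contradicts $A\in\mathscr{C}_\Lambda$. Hence $A$ is positive definite.

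\emph{Determinant bound.} Since $A$ is positive definite and lies in $\mathscr{C}_\Lambda$, the ellipsoid $\mathcal{E}_A$ contains no point of $\Lambda^*$. Applying \eqref{eq:ellipsoidpacking} with $\vol(\mathcal{E}_A)=\omega_n/\sqrt{\det A}$ gives $\delta_n^L \geq \omega_n 2^{-n}/(\sqrt{\det A}\,\covol\Lambda)$. Rearranging gives \eqref{eq:detlowerbound}.

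\emph{Description of $V$.} Write $S = \Lambda^*\cap\partial\mathcal{E}_A$. The key point is that only finitely many lattice points matter locally. Since $A \succcurlyeq \lambda_{\min}(A)\Id$ with $\lambda_{\min}(A)>0$, any $M$ with $\|M\|\le \lambda_{\min}(A)/2$ satisfies $\scalar{(A+M)x}{x} \geq \tfrac12\lambda_{\min}(A)\|x\|^2$. So for $x$ outside a fixed ball $RB_n$ the constraint $\scalar{(A+M)x}{x} \ge 1$ holds automatically. Inside $RB_n$ the set $\Lambda^*\cap RB_n$ is finite, and its points off $\partial\mathcal{E}_A$ satisfy $\scalar{Ax}{x}\ge 1+\eta$ for some $\eta>0$. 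These constraints survive any small perturbation.

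For the inclusion $\supseteq$: if $\scalar{Mx}{x}=0$ for all $x\in S$, then the points of $S$ keep $\scalar{(A+tM)x}{x}=1$, and by the above all other constraints hold for $|t|$ small. Hence $M\in V(A,\mathscr{C}_\Lambda)$.

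For the inclusion $\subseteq$: if $M \in V(A,\mathscr{C}_\Lambda)$ and $x\in S$, then $1 + t\scalar{Mx}{x} \ge 1$ for all small $t$ of both signs, which forces $\scalar{Mx}{x}=0$.

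The main obstacle is positive definiteness, specifically the singular case, which relies on Minkowski's theorem. The remaining steps are routine.
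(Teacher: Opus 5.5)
Your proof is correct, and your treatment of $V(A,\mathscr{C}_\Lambda)$ is essentially the paper's. The paper packages your near/far splitting into one constant $\alpha>1$ with $\scalar{Ax}{x}\geq\alpha$ for $x\in\Lambda^*\setminus\partial\mathcal{E}_A$, plus the matrix inequality $\alpha^{-1}A\preccurlyeq A+tM$ for small $|t|$.

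The real difference is how you get positive definiteness. You prove it first and directly, excluding a kernel by applying Minkowski's convex body theorem to thin long cylinders around $\ker A$. The paper proves only semidefiniteness directly, via density of $\{x/\|x\|: x \in \Lambda^*\}$ in the sphere. It then notes that $A+\e\Id$ is positive definite and still lies in $\mathscr{C}_\Lambda$, so \eqref{eq:ellipsoidpacking} gives \eqref{eq:detlowerbound} for $A+\e\Id$. Letting $\e\to 0$ gives the bound for $A$. Since the right-hand side is strictly positive (because $\delta_n^L\leq 1$), $\det A>0$ follows for free.

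So the step you identify as the main obstacle disappears. The volume bound you extract from Minkowski's theorem is already contained in \eqref{eq:ellipsoidpacking} combined with the trivial inequality $\delta_n^L\leq 1$. Your route is more explicit: it exhibits a violating lattice point and runs linearly from positive definiteness to the bound, but it needs Minkowski's theorem as a separate ingredient. The paper's route is shorter and self-contained given \eqref{eq:ellipsoidpacking}.

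A minor citation slip: in your semidefiniteness step, Minkowski's theorem does not apply to off-center balls. The relevant fact is that every ball of radius larger than the covering radius of $\Lambda$ contains a lattice point, or the density of directions you mention in parentheses.
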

 
\begin{proof}
The positive semi-definiteness of $A$ follows from the fact that the set $\left\{ x/\|x\| \, : \, x \in \Lambda^* \right\}$ is dense in the unit sphere of $\R^n$. If $A$ is positive definite, inequality~\eqref{eq:ellipsoidpacking} reads
\[ \delta_n^L \geq \frac{\vol (\mathcal{E}_A)}{\covol (\Lambda)} 2^{-n} = \frac{\omega_n / \sqrt{\det(A)}}{\covol (\Lambda)} 2^{-n} \]
from which we deduce~\eqref{eq:detlowerbound}. If $A$ is only assumed positive semidefinite, the conclusion still holds by applying this argument to $A + \e \Id$ for arbitrarily small $\e>0$.
 
Let $M \in V(A,\mathscr{C}_\Lambda)$. For every $x \in \Lambda^* \cap \partial\mathcal{E}_A$, we have $\scalar{Ax}{x}=1$. The definition of $V(A,\mathscr{C}_\Lambda)$ implies $\scalar{(A+tM)x}{x} \geq 1$ for $|t|$ small enough, hence $\scalar{Mx}{x}=0$.
 
Conversely, let $M \in \Sym_n$ satisfy $\scalar{Mx}{x} = 0$ for all $x \in \Lambda^* \cap \partial\mathcal{E}_A$. Since a lattice is discrete and $A$ is positive definite, there exists $\alpha >1$ such that $\scalar{Ax}{x} \geq \alpha$ for all $x \in \Lambda^* \setminus \partial\mathcal{E}_A$. If $t$ is real with $|t|$ sufficiently small, we have $\alpha^{-1} A  \preccurlyeq A + t M$, and thus for every~$x \in \Lambda^*$
\begin{align*}
\scalar{(A+tM)x}{x}  = 1 & \ \textnormal{ if }\  x \in \partial\mathcal{E}_A  \\
\scalar{(A+tM)x}{x}  \geq \alpha^{-1} \scalar{Ax}{x} \geq 1 & \ \textnormal{ if } \ x \not \in \partial\mathcal{E}_A
\end{align*}
which implies $A+tM \in \mathscr{C}_\Lambda$. We have thus shown $M \in V(A,\mathscr{C}_\Lambda)$.
\end{proof}
 
The matrices $A \in \mathscr{C}_\Lambda$ of minimal determinant, for which equality holds in~\eqref{eq:detlowerbound}, correspond to a packing of ellipsoids $\{\mathcal{E}_A + x \, : \, x \in 2\Lambda\}$ of optimal density~$\delta_n^L$. Since the function $\log \circ \det$ is strictly concave on $\mathscr{C}_\Lambda$, these minimum points are extreme points of $\mathscr{C}_\Lambda$.
 
\subsection{Structure of the Proof}
 
Fix a standard Brownian motion $(W_t)_{t \geq 0}$ in the Euclidean space $\Sym_n$.
We will consider a Brownian exploration process constructed from $(W_t)$ as in Section~\ref{sec:exploration}.
 
Let $\Lambda \subset \R^n$ be a lattice with minimum norm $>1$, which means that the matrix~$\Id$ lies in the interior of $\mathscr{C}_\Lambda$. Let~$(A_t)$ be the Brownian exploration process of $\mathscr{C}_\Lambda$ starting from $\Id$. To simplify notation, we will pretend that the conclusions of Proposition~\ref{prop:exploration} hold everywhere on the underlying probability space; the argument becomes rigorous by adding the qualifier ``almost surely'' to each statement.
 
By Proposition~\ref{prop:exploration}, for every $t$, the matrix $A_t$ lies in $\mathscr{C}_\Lambda$. For $t$ large enough, it is an extreme point of~$\mathscr{C}_\Lambda$. The Brownian exploration process ``chooses'' an extreme sphere packing. We will see that it is possible to bound below the density of this packing, which amounts to bounding above $\det(A_t)$.
 
Since $(A_t)$ is a martingale and the function $\log \circ \det$ is concave, the function $g \colon t \mapsto \E [ \log \det A_t ]$ is decreasing. We will estimate its rate of decrease as a function of the distribution of short vectors in $\Lambda$. This requires some notation. Consider the Gaussian tail function~$\Phi$ defined for a real $a$ as
\[ \Phi(a) \coloneqq \int_a ^{\iy} e^{-x^2/2} \frac{\mathrm{d}x}{\sqrt{2\pi}} .\]
In other words, $\Phi(a)$ is the probability that a standard Gaussian random variable takes a value $\geq a$. Next, for a real $t > 0$, define a radial function $f_t \colon  \R^n \to \R$ by
\[ f_t(x) \coloneqq 2 \, \Phi \Bigl(\frac{1}{\sqrt{t}}\Bigl(1 - \frac{1}{\|x\|^2}\Bigr) \Bigr).  \]
Finally, given a lattice $\Lambda$, set
\[ K_t(\Lambda) \coloneqq \sum_{\mathclap{x \in \Lambda^* \cap D_t}}  f_t(x) \]
where $D_t \coloneqq \{ x \in \R^n \, : \, (1-3\sqrt{tn})\|x\|^2 \leq 1 \}$.
 
Set $t_{\max} \coloneqq \frac{20 \log n}{n^2}$. Theorem~\ref{theo:main} follows from the next two propositions.
 
\begin{proposition} \label{prop1}
Let $\Lambda \subset \R^n$ be a lattice with minimum norm~$>1$. Let $(A_t)$ be the Brownian exploration process of $\mathscr{C}_\Lambda$ starting from $\Id$. The function $g \colon  t \mapsto \E [\log \det A_t]$ is $C^1$ and satisfies, for $t \in [0,t_{\max}]$, the inequality
\[ g'(t) \leq  - \frac{n^2}{4} + K_t(\Lambda) + 3 \sqrt{t} n^{5/2} + \e_n \]
where $(\e_n)$ is a sequence tending to $0$ as $n \to \infty$.
\end{proposition}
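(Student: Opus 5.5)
Apply Proposition~\ref{prop:Ito} to $F = \log\det$ on the open cone $\Omega$ of positive definite matrices in $\Sym_n$, which contains $\mathscr{C}_\Lambda$ by Lemma~\ref{lemme:CL}. We have $\nabla F(A) = A^{-1}$ and $\nabla^2F(A)(M) = -A^{-1}MA^{-1}$. Hence
\[ g'(t) = -\tfrac12 \E\bigl[\tr_{\Sym_n}(\mathcal{H}_t p_t)\bigr], \qquad \mathcal{H}_t(M) \coloneqq A_t^{-1}MA_t^{-1}. \]
Integrability hypotheses (i),(ii) need $A_t^{-1}$ to be controlled. The determinant lower bound \eqref{eq:detlowerbound} alone does not give this. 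Instead, write $A_t = \Id + (A_t - \Id)$ and use Proposition~\ref{prop:maurey} with the operator norm on $\Sym_n$: $\|A_t-\Id\|_{op}$ has Gaussian tails comparable to those of a GOE-type matrix of variance $t$, so typically $\|A_t-\Id\|_{op}\lesssim \sqrt{tn}$, which is $\ll 1$ for $t\le t_{\max}$. To bound negative moments on the bad event, truncate: $A_t \succcurlyeq$ the positive part, and $\det A_t$ is bounded below, so $\|A_t^{-1}\| \le \|A_t\|^{n-1}/\det A_t$, which has all moments. Continuity of $g'$ follows as in Proposition~\ref{prop:Ito} via Remark~\ref{rema:C1}.

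\textbf{Main estimate.} Split $\tr(\mathcal{H}_t p_t) = \tr(\mathcal{H}_t) - \tr(\mathcal{H}_t(\Id - p_t))$. On $\Sym_n$, $\tr(M\mapsto AMA) = \tfrac12\bigl((\tr A)^2 + \tr A^2\bigr)$, so $\tr(\mathcal{H}_t)\ge \tfrac12 (\tr A_t^{-1})^2 \ge \tfrac12 n^2 (\det A_t)^{-2/n}$, and near $\Id$ this is $\approx n^2/2$. Precisely, on the event $\|A_t-\Id\|_{op}\le 3\sqrt{tn}$ one gets $\tr\mathcal{H}_t \ge \tfrac{n^2}{2}(1-O(\sqrt{tn}))$, producing the main term $-n^2/4$ and an error $O(\sqrt t\, n^{5/2})$. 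The complementary event contributes $o(1)$ by the tail bound of Proposition~\ref{prop:maurey}, where the constant $3$ is chosen to beat the union bound. For the negative part, $\Id-p_t$ projects onto the span of $\{xx^T : x\in\Lambda^*\cap\partial\mathcal{E}_{A_t}\}$, and $\mathcal{H}_t$ has norm $\|A_t^{-1}\|_{op}^2$. So
\[ \tr(\mathcal{H}_t(\Id-p_t)) \le \|A_t^{-1}\|^2_{op}\,\#\{x\in\Lambda^*/\pm : \scalar{A_tx}{x}=1\}, \]
since the dimension of the span is at most the number of contact pairs.

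\textbf{Counting contact points.} Write $\E[\#\text{contacts}] = \sum_{x\in\Lambda^*/\pm}\P(\scalar{A_sx}{x}=1 \text{ for some } s\le t)$. A point $x$ touched by time $t$ stays touched by property (c) of Proposition~\ref{prop:exploration}. The process $\scalar{A_sx}{x} = \scalar{A_s}{xx^T}$ is a projection of the exploration with $\|xx^T\|=\|x\|^2$, starting at $\|x\|^2$. Proposition~\ref{prop:couplage}, applied with $z=xx^T/\|x\|^2$ and level $1/\|x\|^2$, gives
\[ \P(\text{hit by } t)\le \P\bigl(\text{BM from } 1 \text{ hits } \|x\|^{-2} \text{ by } t\bigr) = 2\Phi\bigl((1-\|x\|^{-2})/\sqrt t\bigr) = 2 f_t(x)\]
by the reflection principle, counting $x$ and $-x$ once. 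This yields exactly the $K_t(\Lambda)$ term after the factor $\tfrac12$ and $\|A^{-1}\|^2\approx 1$. Vectors outside $D_t$ can only be hit if $\|A_s-\Id\|_{op}>3\sqrt{tn}$, since then $\scalar{A_sx}{x}\ge(1-3\sqrt{tn})\|x\|^2>1$; that event is again relegated to $\e_n$.

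\textbf{Expected obstacle.} The main difficulty is the bookkeeping on the bad event: factors $\|A_t^{-1}\|^2$ and the unbounded contact count must be controlled there to give $o(1)$. I would handle this with Cauchy--Schwarz, combining the super-polynomially small probability from Proposition~\ref{prop:maurey} with polynomial moment bounds. The contact count is at most $n(n+1)/2$, the dimension of $\Sym_n$, once linear independence is used, so it is bounded deterministically.
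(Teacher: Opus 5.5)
Apart from one step, your route is the paper's: It\^o's formula for $\log\det$ via Proposition~\ref{prop:Ito}, with integrability obtained from $\|A_t^{-1}\|_{\mathrm{op}}\le\|A_t\|_{\mathrm{op}}^{n-1}/\det A_t$. It then uses the good event $G_t=\{\|A_t-\Id\|_{\mathrm{op}}\le 3\sqrt{tn}\}$, the count of contacts through the span of the $x\otimes x$, and persistence of contacts from Proposition~\ref{prop:exploration}(c). It finishes with localization of contacts in $D_t$, and Proposition~\ref{prop:couplage} plus the reflection principle. Your splitting $\tr(\mathcal H_tp_t)=\tr\mathcal H_t-\tr(\mathcal H_t(\Id-p_t))$ is an equivalent bookkeeping of the paper's $\nabla^2F(A)\preccurlyeq-\|A\|_{\mathrm{op}}^{-2}\Id$ combined with $N_t\ge\dim\Sym_n-\frac12|\Lambda^*\cap\partial\mathcal E_t|$. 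It uses the exact value of $\tr\mathcal H_t$ and $\|\mathcal H_t\|_{\mathrm{op}}\le\|A_t^{-1}\|_{\mathrm{op}}^2$ times $\mathrm{rank}(\Id-p_t)$. (Small slip: with $f_t=2\Phi(\cdot)$ as defined in the paper, the reflection principle gives $\P(x\in\partial\mathcal E_t)\le f_t(x)$, not $2f_t(x)$. The constants have enough slack that this is harmless.)

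The gap is the bad event $\overline{G_t}$, which you call the main difficulty and propose to handle with Cauchy--Schwarz. That step does not go through.

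First, it needs moments of $\|A_t^{-1}\|_{\mathrm{op}}$ that are polynomial in $n$ and uniform in $\Lambda$; the $\e_n$ of the statement must not depend on $\Lambda$. The only bound you have is $\|A_t\|_{\mathrm{op}}^{n-1}/\det A_t$, which rests on~\eqref{eq:detlowerbound}. Its right-hand side depends on $\covol(\Lambda)$ and tends to $0$ as $\covol(\Lambda)\to\infty$. This is fine for the qualitative hypotheses of Proposition~\ref{prop:Ito}, but not for an $o(1)$ error against $\P(\overline{G_t})\le 2e^{-n/2}$.

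Second, your claim that the contact count is at most $\dim\Sym_n$ is false. Take $\Lambda=E_8$ scaled to minimum norm $1$ and $A=\Id$: there are $120$ contact pairs versus $\dim\Sym_8=36$. Only $\mathrm{rank}(\Id-p_t)\le\dim\Sym_n$ holds.

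None of this is needed. The missing observation is a sign: $\mathcal H_t\succcurlyeq 0$ on $\Sym_n$ (concavity of $\log\det$), so $\tr(\mathcal H_tp_t)=\tr(p_t\mathcal H_tp_t)\ge 0$ pointwise, and
\[ g'(t)\le-\tfrac12\,\E\bigl[{\bf 1}_{G_t}\tr(\mathcal H_tp_t)\bigr]. \]
Perform your splitting only on $G_t$, where $\|A_t^{-1}\|_{\mathrm{op}}\le(1-3\sqrt{tn})^{-1}$ and every contact point lies in $D_t$. The bad event then costs only the loss $\frac{n^2}{4}\P(\overline{G_t})\le\frac{n^2}{2}e^{-n/2}$ in the main term. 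This is exactly how the paper proceeds: it discards $\overline{G_t}$ at the first step, before any contact counting.
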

 
\begin{proposition} \label{prop2}
Let $t \in [0,t_{\max}]$. If $\Lambda \subset \R^n$ is a random lattice of covolume~$\omega_n$, then
\[ \E[K_t(\Lambda)] \leq (2+\e_n) \exp \Bigl( \frac{n^2t}{8} \Bigr) \]
where $(\e_n)$ is a sequence tending to $0$ as $n \to \infty$.
\end{proposition}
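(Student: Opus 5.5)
Siegel's formula~\eqref{eq:siegel} with $v=\omega_n$ and $f = f_t\mathbf{1}_{D_t}$ gives
\[ \E[K_t(\Lambda)] = \frac{1}{\omega_n}\int_{D_t} f_t(x)\,\mathrm{d}x = \frac{n}{\omega_n}\,\omega_n \int_0^{R_t} 2\,\Phi\Bigl(\frac{1}{\sqrt t}\Bigl(1-\frac{1}{r^2}\Bigr)\Bigr) r^{n-1}\,\mathrm{d}r , \]
where $R_t = (1-3\sqrt{tn})^{-1/2}$. Since $t \leq t_{\max} = 20\log n/n^2$, we have $\sqrt{tn} \leq C\sqrt{\log n}/\sqrt n \to 0$, so $D_t$ is a ball of radius $R_t$ slightly above $1$. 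It is convenient to substitute $s = \frac{1}{\sqrt t}(1-r^{-2})$, so that $r^{-2} = 1-\sqrt t\, s$ and $r^n = (1-\sqrt t s)^{-n/2}$. One can write $n r^{n-1}\,\mathrm{d}r = \mathrm{d}(r^n)$ and integrate by parts, using $\Phi' = -\varphi$ with $\varphi$ the standard Gaussian density. This gives
\[ \E[K_t] = 2\Bigl[\Phi(s) r^n\Bigr]_{r=0}^{R_t} + 2\int_{-\infty}^{s_{\max}} \varphi(s)\,(1-\sqrt t\, s)^{-n/2}\,\mathrm{d}s, \]
with $s_{\max} = 3\sqrt{n}$ (because $1-R_t^{-2} = 3\sqrt{tn}$) and the lower limit $s=-\infty$ corresponding to $r=0$. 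The boundary term at $r=0$ vanishes. The boundary term at $R_t$ equals $2\Phi(3\sqrt n)(1-3\sqrt{tn})^{-n/2}$. This is bounded by $e^{-9n/2}\exp(C n\sqrt{tn})$, and $n\sqrt{tn} \leq C\sqrt{n\log n} = o(n)$, so this term tends to $0$ and is absorbed into $\e_n$.

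The main term is $2\E[(1-\sqrt t\, G)^{-n/2}\mathbf 1_{G\leq 3\sqrt n}]$ with $G$ a standard Gaussian. The heuristic is that $(1-\sqrt t G)^{-n/2} \approx \exp(\frac{n}{2}\sqrt t G + \frac{n}{4} tG^2+\dots)$, and $\E e^{aG} = e^{a^2/2}$ with $a = \frac n2\sqrt t$ gives exactly $e^{n^2 t/8}$. To make this rigorous I would write $-\frac n2\log(1-u) = \frac n2 u + \frac n2 h(u)$ with $h(u) = -\log(1-u)-u \geq 0$, where $u=\sqrt t\, s \leq 3\sqrt{tn} \to 0$ on the domain. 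Then I complete the square:
\[ \int \varphi(s) e^{\frac n2 \sqrt t s}(\cdots)\,\mathrm{d}s = e^{n^2t/8}\int \varphi(s-\mu)\, e^{\frac n2 h(\sqrt t s)}\,\mathrm{d}s,\qquad \mu = \tfrac{n}{2}\sqrt t . \]
It then suffices to show that $\E[e^{\frac n2 h(\sqrt t (G+\mu))}\mathbf 1_{G+\mu\leq 3\sqrt n}] \leq 1+\e_n$ uniformly in $t\in[0,t_{\max}]$.

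This uniform bound is the main obstacle. For $u \leq 1/2$ we have $h(u)\leq u^2$, so the exponent is at most $\frac n2 t (G+\mu)^2$. Here $nt \leq 20\log n/n \to 0$ and $\mu \leq \sqrt{5\log n}$. On the typical region $|G| \leq n^{1/4}$ the exponent is $O(\log n\,(\sqrt n + \log n)/n) \to 0$, which contributes $1+o(1)$. On the tail region up to $s \leq 3\sqrt n$ the exponent is at most $\frac n2 t s^2 \leq 10\log n\, s^2/n$. This is a small multiple of $s^2$, dominated by the Gaussian factor $e^{-G^2/2}$, so the tail contributes $o(1)$ by dominated convergence or a direct estimate. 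The point of the cutoff $D_t$ is precisely that it keeps $u$ bounded away from $1$, so the moment stays controlled. Collecting the terms gives $\E[K_t(\Lambda)] \leq (2+\e_n)\exp(n^2t/8)$.
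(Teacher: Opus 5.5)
Your proposal is correct and follows the paper's proof step for step. Both use Siegel's formula, polar coordinates, the substitution $y=(1-r^{-2})/\sqrt{t}$, integration by parts with the same boundary term $2\Phi(3\sqrt{n})(1-3\sqrt{nt})^{-n/2}$, and the same inequality $\frac{1}{1-x}\leq \exp(x+x^2)$ (your $h(u)\leq u^2$). The only difference is the last step. Instead of completing the square and splitting into typical and tail regions, the paper absorbs the quadratic term $\frac{nt}{2}y^2$ into the Gaussian variance, extends the integral to all of $\R$, and computes it exactly as $(1-nt)^{-1/2}\exp\bigl(\frac{n^2t}{8(1-nt)}\bigr)$. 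Equivalently, your remaining quantity $\E\bigl[e^{\frac{nt}{2}(G+\mu)^2}\bigr]$ is an explicit Gaussian integral, so the region splitting is unnecessary.
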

 
Let us deduce Theorem~\ref{theo:main} by combining these two propositions. Let $\Lambda \subset \R^n$ be a random lattice of covolume $\omega_n$. Set $T \coloneqq \frac{16 \log n}{n^2}$. This satisfies $e^{n^2T/8}=n^2$.
 
By Proposition~\ref{prop2} (the expectation referring to the random lattice $\Lambda$),
\begin{align*} \E \Bigl[ \int_0^T K_t(\Lambda) \, \mathrm{d} t \Bigr]
& \leq  (2+\e_n) \int_0^T \exp \Bigl( \frac{n^2t}{8} \Bigr) \, \mathrm{d}t \\
& \leq  (2+\e_n) \frac{8}{n^2} \exp  \Bigl( \frac{n^2T}{8} \Bigr) = 8(2+\e_n).
\end{align*}
Take $n$ large enough so that $8(2+\e_n) < 20$. Markov's inequality gives
\[ \P \Bigl( \int_0^T K_t(\Lambda) \, \mathrm{d}t \geq 40 \Bigr) < \frac{1}{2}. \]
Recall also that a random lattice of covolume $\omega_n$ has minimum norm $>1$ with probability $\geq \frac{1}{2}$. We deduce that there exists a lattice $\Lambda$ of covolume $\omega_n$ and minimum norm $>1$ for which
\[ \int_0^T K_t(\Lambda) \,\mathrm{d}t \leq 40 .\]
Applying Proposition~\ref{prop1} to such a lattice gives (the expectation now referring to the stochastic process $(A_t)$)
\begin{align} \nonumber \E [ \log \det A_T ] = g(T) &= \int_0^T g'(t) \, \mathrm{d}t \\
\label{eq:final} & \leq  - \frac{n^2T}{4} + \int_0^T K_t(\Lambda) \, \mathrm{d}t + \int_0^T 3 \sqrt{t} n^{5/2} \, \mathrm{d}t + \e_n T \\
\nonumber & \leq  -4 \log(n) + 40 + 2 T^{3/2} n^{5/2} + \e_n T \\\nonumber & \leq   -4 \log (n) + C
\end{align}
since $T^{3/2}n^{5/2} = o(1)$. There therefore exists a matrix $A \in \mathscr{C}_\Lambda$ such that
\[ \det(A) \leq \exp( -4 \log(n) +C ) = \frac{e^C}{n^4} \]
and hence, by~\eqref{eq:ellipsoidpacking},
\[ \delta_n^L \geq \frac{2^{-n}}{\sqrt{\det(A)}} \geq c n^2 \cdot 2^{-n}.\]
 
\subsection{Proof of Proposition~\ref{prop1}}
 
We will use estimates for the operator norm (denoted $\|\cdot\|_{\mathrm{op}}$) of random matrices. Recall that $(W_t)$ is a standard Brownian motion in the Euclidean space $\Sym_n$. The law of $W_1$ is known as the Gaussian Orthogonal Ensemble (GOE). The following inequality is a consequence of the isoperimetric inequality in Gaussian space (see equation (6.38) in \cite{AS17}): for every $\e>0$,
\[ \P( \|W_1\|_{\mathrm{op}} \geq 2 \sqrt{n} +\e ) \leq \exp(-\e^2/2). \]
 
For every $t>0$, the random variable $W_t/\sqrt{t}$ has the same distribution as $W_1$. Proposition~\ref{prop:maurey} then implies, for every $\e>0$, the inequality
\begin{equation} \label{eq:subgaussian} \P \bigl( \|A_t - \Id\|_{\mathrm{op}} \geq \sqrt{t} (2\sqrt{n}+\e) \bigr)\leq 2 \exp(-\e^2/2) .\end{equation}
We will draw two consequences from this inequality. First, for every non-negative integer $k$ and every $t>0$,
\begin{equation} \label{eq:finitemoments} \int_0^t \E \left[ \|A_s\|_{\mathrm{op}}^k \right] \, \mathrm{d}s < \iy \ \textnormal{ and } \ \int_0^t \E \left[ \|A_s^{-1}\|_{\mathrm{op}}^k \right] \, \mathrm{d}s< \iy .\end{equation}
The first of these inequalities follows from~\eqref{eq:subgaussian} since sub-Gaussian random variables have all their moments finite; the second follows from the first by writing
\[ \|A_t^{-1}\|_{\mathrm{op}} \leq \frac{\|A_t\|_{\mathrm{op}}^{n-1}}{\det (A_t)} \]
and using the fact, stated in Lemma~\ref{lemme:CL}, that the determinant is bounded below by a value~$>0$ on $\mathscr{C}_\Lambda$.
 
Second, this inequality will allow us to restrict our study to the case where $A_t$ is very close to the identity matrix. For every $t > 0$, introduce the ``favorable'' event
\[ G_t = \bigl\{ \|A_t - \Id \|_{\mathrm{op}}  \leq 3\sqrt{tn} \bigr\} .\]
It follows from~\eqref{eq:subgaussian} that this event has probability very close to $1$, since the complementary event satisfies
\[ \P (\overline{G_t}) \leq 2 e^{-n/2} .\]
 
We now begin the proof of Proposition~\ref{prop1}.
Apply It\^o's formula in the form of Proposition~\ref{prop:Ito} to the function $F = \log \circ \det$. For $A \in \mathscr{C}_\Lambda$ and $B \in \Sym_n$, the Taylor expansion
\begin{align*} \log \det (A + \e B) & = \log  \det A + \log \det (\Id + \e A^{-1}B) \\
 &  = \log \det A + \e \tr(A^{-1}B) - \frac{\e^2}{2} \tr((A^{-1}B)^2) + O(\e^3)
 \end{align*}
 gives
 \begin{align*}
   \scalar{(\nabla F)(A)}{B} &= \tr (A^{-1}B) \\
 \scalar{(\nabla^2 F)(A)B}{B} &= - \tr (A^{-1}BA^{-1}B) .
 \end{align*}
 Writing $\|A\| \coloneqq (\tr A^2)^{1/2}$, we have $\| \nabla F (A)\| \leq \|A^{-1}\| \leq \sqrt{n} \|A^{-1}\|_{\mathrm{op}}$ and $\|\nabla^2 F(A)\|_{\mathrm{op}} \leq \|A^{-1}\|^2 \leq n \|A^{-1}\|^2_{\mathrm{op}}$. It follows from~\eqref{eq:finitemoments} that the hypotheses of Proposition~\ref{prop:Ito} are satisfied. Consequently, the function $g$ is $C^1$ and satisfies for $t\geq 0$
\[ g'(t) =
\frac{1}{2} \E \left[ \tr
(\nabla^2F(X_t) p_t ) \right] \]
where $p_t$ is the orthogonal projection onto the subspace $V(A_t,\mathscr{C}_\Lambda)$, whose dimension we denote $N_t$.
 
The inequality $\tr(A^{-1}BA^{-1}B) \geq \|A\|_{\mathrm{op}}^{-2} \tr(B^2)$ implies $\nabla^2F(A) \preccurlyeq - \|A\|_{\mathrm{op}}^{-2} \Id$. Thus, for $t \geq 0$,
\[
g'(t)   \leq  - \frac{1}{2} \E \left[ \|A_t\|_{\mathrm{op}}^{-2} N_t \right]  \leq - \frac{1}{2}  \E \left[ {\bf 1}_{G_t} \|A_t\|_{\mathrm{op}}^{-2} N_t \right] .
\]
If the event $G_t$ holds, then
$\|A_t\|_{\mathrm{op}} \leq 1 + 3 \sqrt{tn}$ and therefore
\[ \|A_t\|_{\mathrm{op}}^{-2} \geq (1 + 3 \sqrt{tn})^{-2} \geq 1 - 6 \sqrt{tn}. \]
Using the bound $N_t \leq \dim \Sym_n \leq n^2$, we obtain
\[
g'(t)  \leq  - \frac{1}{2} \E \left[ {\bf 1}_{G_t}  N_t \right] + 3 \sqrt{t} n^{5/2}.
\]
 
Let $\mathcal{E}_t \coloneqq  \mathcal{E}_{A_t}$ be the ellipsoid associated with the matrix $A_t$. By Lemma~\ref{lemme:CL},
\[ V(A_t,\mathscr{C}_\Lambda) = \{ x \otimes x \, : \, x \in \Lambda^* \cap \partial\mathcal{E}_t\}^\perp \]
and therefore (the factor $\frac{1}{2}$ coming from $(-x) \otimes (-x) = x \otimes x$),
\[ N_t  \geq \dim \Sym_n - \frac{1}{2} |\Lambda^* \cap \partial\mathcal{E}_t| .\]
We therefore have
\begin{align*}
 g'(t) & \leq  - \frac{1}{2} \dim \Sym_n \times \P(G_t) + \frac{1}{4} \E \left[ {\bf 1}_{G_t} |\Lambda^* \cap \partial\mathcal{E}_t| \right] + 3 \sqrt{t} n^{5/2} \\
 & \leq -\frac{n^2}{4} + \frac{1}{4} \E \left[ {\bf 1}_{G_t} |\Lambda^* \cap \partial\mathcal{E}_t| \right] + 3 \sqrt{t} n^{5/2} + n^2e^{-n/2} .
\end{align*}
 
If the event $G_t$ holds, then $A_t \succcurlyeq (1-3 \sqrt{tn}) \Id$ and consequently every $x \in \Lambda^*$ satisfies $\scalar{A_tx}{x} \geq (1-3\sqrt{tn}) \|x\|^2$. Thus the condition $x \in \partial \mathcal{E}_t$, equivalent to $\scalar{A_tx}{x}=1$, can only be satisfied if $x \in D_t$. We therefore have
\[ {\bf 1}_{G_t} |\Lambda^* \cap \partial\mathcal{E}_t|
= \sum_{\mathclap{x \in \Lambda^* \cap D_t}} {\bf 1}_{G_t} {\bf 1}_{\{
x \in \partial\mathcal{E}_t\}}
\]
and by linearity of expectation,
\begin{equation} \label{eq:sum-over-lattice-points}
\E \left[ {\bf 1}_{G_t} |\Lambda^* \cap \partial\mathcal{E}_t| \right]
\leq \sum_{\mathclap{x \in \Lambda^* \cap D_t}} \P(x \in \partial\mathcal{E}_t).
\end{equation}
 
We next use the following observation: for every $t>0$ and $x \in \Lambda^*$,
\[ x \in  \partial\mathcal{E}_t \iff \exists s \in [0,t] \, : \, \scalar{A_sx}{x} \leq 1 .\]
This equivalence follows from two facts. On one hand,
if $s \leq t$ then $V(A_t,\mathscr{C}_\Lambda) \subset V(A_s,\mathscr{C}_\Lambda)$ by Proposition~\ref{prop:exploration}(iii), so the equality $\scalar{A_sx}{x} = 1$ implies $\scalar{A_tx}{x}=1$. On the other hand, since $A_s \in \mathscr{C}_\Lambda$, saying $\scalar{A_sx}{x} \leq 1$ is equivalent to $\scalar{A_sx}{x} = 1$.
 
This allows us to write, using the coupling inequality proved in Proposition~\ref{prop:couplage} and setting $y \coloneqq \frac{x}{\|x\|}$,
\begin{align*} \P(x \in \partial\mathcal{E}_t) & =
 \P \Bigl( \inf_{0 \leq s \leq t} \scalar{A_sx}{x} \leq 1 \Bigr) \\
& \leq  \P \Bigl( \inf_{0 \leq s \leq t} \scalar{(\Id+W_s)x}{x} \leq 1 \Bigr) \\
& =  \P \Bigl( \inf_{0 \leq s \leq t} \scalar{W_sy}{y} \leq \frac{1}{\|x\|^2 } - 1 \Bigr).
\end{align*}
The process $\beta_s \coloneqq  \scalar{W_sy}{y}$ is a scalar Brownian motion. A consequence of the reflection principle \cite[Theorem 2.4]{LeGall13} is the following formula, for every $a>0$:
\[ \P \Bigl( \inf_{0 \leq s \leq t} \beta_s \leq -a \Bigr) =
\P \Bigl( \sup_{0 \leq s \leq t} \beta_s \geq a \Bigr) =
2 \P(|\beta_t| \geq a) =  2 \Phi \Bigl( \frac{a}{\sqrt{t}} \Bigr).
\]
Applied with $a=1-\|x\|^{-2}$, this formula yields the inequality
\[ \P(x \in \partial\mathcal{E}_t) \leq
 2 \Phi \Bigl( \frac{1}{\sqrt{t}}\Bigl( 1 - \frac{1}{\|x\|^2}\Bigr)\Bigr) \eqqcolon f_t(x).
\]
We can therefore deduce from~\eqref{eq:sum-over-lattice-points} that
\[ \E \left[ {\bf 1}_{G_t} |\Lambda^* \cap \partial\mathcal{E}_t| \right]
\leq \sum_{\mathclap{x \in \Lambda^* \cap D_t}} f_t(x) \eqqcolon K_t(\Lambda) \]
which completes the proof of Proposition~\ref{prop1}.
 
\subsection{Proof of Proposition~\ref{prop2}}
 
We may assume $n$ is large enough that $3\sqrt{nt_{\max}} \leq \frac{1}{2}$. Let $\Lambda \subset \R^n$ be a random lattice of covolume $\omega_n$.
Apply Siegel's formula \eqref{eq:siegel} to the function $f_t {\bf 1}_{D_t}$ to write
\[ \E \left[ K_t(\Lambda) \right]  = \frac{1}{\omega_n} \int_{D_t} f_t(x) \, \mathrm{d}x = \frac{2}{\omega_n} \int_{D_t} \Phi \Bigl( \frac{1}{\sqrt{t}} \Bigl(1 - \frac{1}{\|x\|^2}\Bigr) \Bigr) \]
After integration in polar coordinates, we obtain
\[ \E \left[ K_t(\Lambda) \right]
= 2n \int_0^{(1-3 \sqrt{nt})^{-1/2}} r^{n-1} \Phi \Bigl( \frac{1 - r^{-2}}{\sqrt{t}} \Bigr) \, \mathrm{d}r.
\]
Making the substitution $y=\frac{1-r^{-2}}{\sqrt{t}}$, we get
 \[ \E \left[ K_t(\Lambda) \right] = n \sqrt{t} \int_{-\iy}^{3 \sqrt{n}} \Phi(y) ( 1 - y \sqrt{t})^{-\frac{n+2}{2}} \, \mathrm{d}y. \]
An integration by parts then yields
\[
\E \left[ K_t(\Lambda) \right] = 2 \Phi(3\sqrt{n})(1-3\sqrt{nt})^{-n/2} + 2 \int_{-\iy}^{3 \sqrt{n}} e^{-y^2/2} (1-y \sqrt{t})^{-n/2} \, \frac{\mathrm{d}y}{\sqrt{2\pi}} .\]
 Using the inequality $\frac{1}{1-x} \leq \exp(2x)$ valid for $x \in [0,\frac{1}{2}]$ and the inequality $\Phi(x) \leq \exp(-x^2/2)$ valid for all $x \geq 0$, we get
\[ \Phi(3\sqrt{n})(1-3\sqrt{nt})^{-n/2} \leq \exp \bigl(- 4.5n + 3 n^{3/2} \sqrt{t} \bigr). \]
Since $t \leq t_{\max} = \frac{20 \log n}{n^2}$, this quantity tends to $0$ as $n \to \infty$.
 
We next use the inequality $\frac{1}{1-x} \leq \exp(x+x^2)$ valid for all $x \leq \frac{1}{2}$ to obtain
\begin{align*}
 \int_{-\iy}^{3 \sqrt{n}} e^{-y^2/2} (1-y \sqrt{t})^{-n/2} \,
 \frac{\mathrm{d}y}{\sqrt{2\pi}} & \leq \int_{-\iy}^{3 \sqrt{n}} \exp \Bigl(-\frac{y^2}{2} +  \frac{ny \sqrt{t}}{2} + \frac{ny^2t}{2} \Bigr) \, \frac{\mathrm{d}y}{\sqrt{2\pi}} \\
 & \leq \int_{-\iy}^{\iy} \exp \Bigl(-\frac{y^2}{2(1-nt)^{-1}} +  \frac{ny \sqrt{t}}{2} \Bigr) \, \frac{\mathrm{d}y}{\sqrt{2\pi}} \\
 & = \frac{1}{\sqrt{1-nt}} \exp \Bigl( \frac{n^2 t}{8 (1-nt)} \Bigr) \\
 & = \frac{\exp \bigl( \frac{n^3t^2}{8(1-nt)} \bigr)}{\sqrt{1-nt}}
 \times \exp \Bigl( \frac{n^2t}{8} \Bigr).
\end{align*}
This computation used the Laplace transform of Gaussian densities: for $\sigma=(1-nt)^{-1/2}$ and $\alpha  = n\sqrt{t}/2$,
\[
\int_{-\iy}^{\iy} \exp \Bigl( -\frac{y^2}{2\sigma^2} \Bigr) \exp(\alpha y) \, \frac{\mathrm{d}y}{\sqrt{2\pi}}= \sigma \exp \Bigl( \frac{\alpha^2\sigma^2}{2} \Bigr).
\]
 
Under the hypothesis $t \leq t_{\max} = \frac{20 \log n}{n^2}$, the factor in front of $\exp(n^2t/8)$ tends to $1$ as~$n \to \infty$, which completes the proof of Proposition~\ref{prop2}.
 
\subsubsection*{Acknowledgments}
 
I thank the participants of the Lyon probability reading group for all the talks and discussions around sphere packings, and in particular Mikael de la Salle for his valuable comments on this text.
 
\bibliographystyle{apalike}
\bibliography{ref} 

@misc{AGZ26,
      title={Stochastically evolving ellipsoids with symmetries}, 
      author={Elisha B. Abuya and Nihar Gargava and Yufei Zhao},
      year={2026},
      note={arXiv 2606.05105},
      archivePrefix={arXiv},
      primaryClass={math.MG},
      url={https://arxiv.org/abs/2606.05105}, 
}

@article {CT62,
    AUTHOR = {Ciesielski, Z. and Taylor, S. J.},
     TITLE = {First passage times and sojourn times for {B}rownian motion in
              space and the exact {H}ausdorff measure of the sample path},
   JOURNAL = {Trans. Amer. Math. Soc.},
  FJOURNAL = {Transactions of the American Mathematical Society},
    VOLUME = {103},
      YEAR = {1962},
     PAGES = {434--450},
      ISSN = {0002-9947,1088-6850},
   MRCLASS = {60.62},
  MRNUMBER = {143257},
MRREVIEWER = {D.\ A.\ Darling},
       DOI = {10.2307/1993838},
       URL = {https://doi.org/10.2307/1993838},
}

@book {LeGall13,
    AUTHOR = {Le Gall, Jean-Fran{\c c}ois},
     TITLE = {Mouvement brownien, martingales et calcul stochastique},
    SERIES = {Math\'ematiques \& Applications (Berlin) [Mathematics \&
              Applications]},
    VOLUME = {71},
 PUBLISHER = {Springer, Heidelberg},
      YEAR = {2013},
     PAGES = {viii+176},
      ISBN = {978-3-642-31897-9; 978-3-642-31898-6},
   MRCLASS = {60H05 (60G07 60G44 60H10 60J25)},
  MRNUMBER = {3184878},
MRREVIEWER = {Josep\ Vives},
       DOI = {10.1007/978-3-642-31898-6},
       URL = {https://doi.org/10.1007/978-3-642-31898-6},
}

@book{AS17,
 author = {Aubrun, Guillaume and Szarek, Stanis{\l}aw J.},
 title = {Alice and {Bob} meet {Banach}. {The} interface of asymptotic geometric analysis and quantum information theory},
 fseries = {Mathematical Surveys and Monographs},
 series = {Math. Surv. Monogr.},
 issn = {0076-5376},
 volume = {223},
 isbn = {978-1-4704-3468-7; 978-1-4704-4172-2},
 year = {2017},
 publisher = {Providence, RI: American Mathematical Society (AMS)},
 language = {English},
 doi = {10.1090/surv/223},
 zbMATH = {6780344},
 Zbl = {1402.46001}
}

@article {Hlawka43,
    AUTHOR = {Hlawka, Edmund},
     TITLE = {Zur {G}eometrie der {Z}ahlen},
   JOURNAL = {Math. Z.},
  FJOURNAL = {Mathematische Zeitschrift},
    VOLUME = {49},
      YEAR = {1943},
     PAGES = {285--312},
      ISSN = {0025-5874,1432-1823},
   MRCLASS = {10.0X},
  MRNUMBER = {9782},
MRREVIEWER = {K.\ Mahler},
       DOI = {10.1007/BF01174201},
       URL = {https://doi.org/10.1007/BF01174201},
}

@incollection {Benoist09,
    AUTHOR = {Benoist, Yves},
     TITLE = {Five lectures on lattices in semisimple {L}ie groups},
 BOOKTITLE = {G\'eom\'etries \`a{} courbure n\'egative ou nulle, groupes
              discrets et rigidit\'es},
    SERIES = {S\'emin. Congr.},
    VOLUME = {18},
     PAGES = {117--176},
 PUBLISHER = {Soc. Math. France, Paris},
      YEAR = {2009},
      ISBN = {978-2-85629-240-2},
   MRCLASS = {22E40 (11F06 20H10)},
  MRNUMBER = {2655311},
MRREVIEWER = {Dave\ Witte\ Morris},
}

@article{Siegel45,
 author = {Siegel, Carl Ludwig},
 title = {A mean value theorem in geometry of numbers},
 fjournal = {Annals of Mathematics. Second Series},
 journal = {Ann. Math. (2)},
 issn = {0003-486X},
 volume = {46},
 pages = {340--347},
 year = {1945},
 language = {English},
 doi = {10.2307/1969027},
 zbMATH = {3107230},
 Zbl = {0063.07011}
}

@article{SZ24,
 author = {Sardari, Naser Talebizadeh and Zargar, Masoud},
 title = {New upper bounds for spherical codes and packings},
 fjournal = {Mathematische Annalen},
 journal = {Math. Ann.},
 issn = {0025-5831},
 volume = {389},
 number = {4},
 pages = {3653--3703},
 year = {2024},
 language = {English},
 doi = {10.1007/s00208-023-02738-z},
 zbMATH = {7892675}
}

@article{CZ14,
 author = {Cohn, Henry and Zhao, Yufei},
 title = {Sphere packing bounds via spherical codes},
 fjournal = {Duke Mathematical Journal},
 journal = {Duke Math. J.},
 issn = {0012-7094},
 volume = {163},
 number = {10},
 pages = {1965--2002},
 year = {2014},
 language = {English},
 doi = {10.1215/00127094-2738857},
 zbMATH = {6331281},
 Zbl = {1296.05046}
}

@article{KL78,
 author = {Kabatyanskii, Grigori A. and Levenshtein, Vladimir I.},
 title = {On bounds to packings on the sphere and in space},
 fjournal = {Problemy Peredachi Informatsii},
 journal = {Probl. Peredachi Inf.},
 issn = {0555-2923},
 volume = {14},
 number = {1},
 pages = {3--25},
 year = {1978},
 language = {Russian},
 zbMATH = {3633251},
 Zbl = {0407.52005}
}

@article{Blichfeldt29,
 author = {Blichfeldt, Hans F.},
 title = {The minimum value of quadratic forms, and the closest packing of spheres.},
 fjournal = {Mathematische Annalen},
 journal = {Math. Ann.},
 issn = {0025-5831},
 volume = {101},
 pages = {605--608},
 year = {1929},
 language = {English},
 doi = {10.1007/BF01454863},
 url = {https://eudml.org/doc/159357},
 zbMATH = {2572953},
 JFM = {55.0721.01}
}

@article{Vance11,
 author = {Vance, Stephanie},
 title = {Improved sphere packing lower bounds from {Hurwitz} lattices},
 fjournal = {Advances in Mathematics},
 journal = {Adv. Math.},
 issn = {0001-8708},
 volume = {227},
 number = {5},
 pages = {2144--2156},
 year = {2011},
 language = {English},
 doi = {10.1016/j.aim.2011.04.016},
 zbMATH = {5918260},
 Zbl = {1228.11095}
}

@article{Ball92,
 author = {Ball, Keith},
 title = {A lower bound for the optimal density of lattice packings},
 fjournal = {IMRN. International Mathematics Research Notices},
 journal = {Int. Math. Res. Not.},
 issn = {1073-7928},
 volume = {1992},
 number = {10},
 pages = {217--221},
 year = {1992},
 language = {English},
 doi = {10.1155/S1073792892000242},
 zbMATH = {179210},
 Zbl = {0776.52006}
}

@article{DR47,
 author = {Davenport, Harold and Rogers, Claude A.},
 title = {Hlawka's theorem in the geometry of numbers},
 fjournal = {Duke Mathematical Journal},
 journal = {Duke Math. J.},
 issn = {0012-7094},
 volume = {14},
 pages = {367--375},
 year = {1947},
 language = {English},
 doi = {10.1215/S0012-7094-47-01429-4},
 zbMATH = {3047575},
 Zbl = {0030.34602}
}

@article{CKMRV17,
 author = {Cohn, Henry and Kumar, Abhinav and Miller, Stephen D. and Radchenko, Danylo and Viazovska, Maryna},
 title = {The sphere packing problem in dimension {{\(24\)}}},
 fjournal = {Annals of Mathematics. Second Series},
 journal = {Ann. Math. (2)},
 issn = {0003-486X},
 volume = {185},
 number = {3},
 pages = {1017--1033},
 year = {2017},
 language = {English},
 doi = {10.4007/annals.2017.185.3.8},
 zbMATH = {6731864},
 Zbl = {1370.52037}
}

@article{Viazovska17,
 author = {Viazovska, Maryna},
 title = {The sphere packing problem in dimension 8},
 fjournal = {Annals of Mathematics. Second Series},
 journal = {Ann. Math. (2)},
 issn = {0003-486X},
 volume = {185},
 number = {3},
 pages = {991--1015},
 year = {2017},
 language = {English},
 doi = {10.4007/annals.2017.185.3.7},
 zbMATH = {6731863},
 Zbl = {1373.52025}
}

@article{Hales05,
 author = {Hales, Thomas C.},
 title = {A proof of the {Kepler} conjecture},
 fjournal = {Annals of Mathematics. Second Series},
 journal = {Ann. Math. (2)},
 issn = {0003-486X},
 volume = {162},
 number = {3},
 pages = {1065--1185},
 year = {2005},
 language = {English},
 doi = {10.4007/annals.2005.162.1065},
 zbMATH = {5033806},
 Zbl = {1096.52010}
}

@article{Klartag26,
  title={Lattice packing of spheres in high dimensions using a stochastically evolving ellipsoid},
  author={Klartag, Boaz},
  journal={Inventiones mathematicae},
  pages={1--29},
  year={2026},
  publisher={Springer}
}

@article {Rogers47,
    AUTHOR = {Rogers, Claude A.},
     TITLE = {Existence theorems in the geometry of numbers},
   JOURNAL = {Ann. of Math. (2)},
  FJOURNAL = {Annals of Mathematics. Second Series},
    VOLUME = {48},
      YEAR = {1947},
     PAGES = {994--1002},
      ISSN = {0003-486X},
   MRCLASS = {10.0X},
  MRNUMBER = {22863},
MRREVIEWER = {V.\ Knichal},
       DOI = {10.2307/1969390},
       URL = {https://doi.org/10.2307/1969390},
}

@article {Venkatesh13,
    AUTHOR = {Venkatesh, Akshay},
     TITLE = {A note on sphere packings in high dimension},
   JOURNAL = {Int. Math. Res. Not. IMRN},
  FJOURNAL = {International Mathematics Research Notices. IMRN},
      YEAR = {2013},
    NUMBER = {7},
     PAGES = {1628--1642},
      ISSN = {1073-7928,1687-0247},
   MRCLASS = {52C17},
  MRNUMBER = {3044452},
MRREVIEWER = {Ranjeet\ Kaur\ Sehmi},
       DOI = {10.1093/imrn/rns096},
       URL = {https://doi.org/10.1093/imrn/rns096},
}

@misc{CJMS23,
  title={A new lower bound for sphere packing},
  author={Campos, Marcelo and Jenssen, Matthew and Michelen, Marcus and Sahasrabudhe, Julian},
  note={arXiv 2302.09412},
  archivePrefix={arXiv},
  primaryClass={math.MG},
  YEAR = {2023},
}

@misc{SW25,
  title={The lattice packing problem in dimension 9 by {V}oronoi's algorithm},
  author={Dutour Sikiri{\'c}, Mathieu and van Woerden, Wessel},
  note={arXiv 2508.20719},
  archivePrefix={arXiv},
  primaryClass={math.NT},
  year={2025}
}

@book{Splag,
    AUTHOR = {Conway, John H. and Sloane, Neil J. A.},
     TITLE = {Sphere packings, lattices and groups},
    SERIES = {Grundlehren der mathematischen Wissenschaften [Fundamental  Principles of Mathematical Sciences]},
    VOLUME = {290},
   EDITION = {Third},

 PUBLISHER = {Springer-Verlag, New York},
      YEAR = {1999},
     PAGES = {lxxiv+703},
      ISBN = {0-387-98585-9},
   MRCLASS = {11H31 (05B40 11H06 20D08 52C07 52C17 94B75 94C30)},
  MRNUMBER = {1662447},
MRREVIEWER = {Renaud\ Coulangeon},
       DOI = {10.1007/978-1-4757-6568-7},
       URL = {https://doi.org/10.1007/978-1-4757-6568-7},
}
 
\end{document}